\newcommand{\bC}{\mathbb{C}}
\newcommand{\bM}{\mathbb{M}}
\newcommand{\bQ}{\mathbb{Q}}
\newcommand{\bR}{\mathbb{R}}
\newcommand{\bZ}{\mathbb{Z}}
\newcommand{\unit}{\mathbf{1}}
\newcommand{\cM}{\mathcal{M}}
\newcommand{\fg}{\mathfrak{g}}
\newcommand{\fH}{\mathfrak{H}}
\newcommand{\fm}{\mathfrak{m}}
\newcommand{\fsl}{\mathfrak{sl}}
\newcommand{\ad}{\operatorname{ad}}
\newcommand{\Aut}{\operatorname{Aut}}
\newcommand{\End}{\operatorname{End}}
\newcommand{\Hom}{\operatorname{Hom}}
\newcommand{\Tr}{\operatorname{Tr}}
\newtheorem{thm}{Theorem}[section]
\newtheorem{prop}[thm]{Proposition}
\newtheorem{cor}[thm]{Corollary}
\theoremstyle{definition}
\newtheorem{defn}[thm]{Definition}
\theoremstyle{remark}
\newtheorem{rem}[thm]{Remark}
\begin{document}

\title{51 constructions of the Moonshine module}
\author{Scott Carnahan}

\begin{abstract}
We show using Borcherds products that for any fixed-point free automorphism of the Leech lattice satisfying a ``no massless states'' condition, the corresponding cyclic orbifold of the Leech lattice vertex operator algebra is isomorphic to the Monster vertex operator algebra.  This induces an ``orbifold duality'' bijection between algebraic conjugacy classes of fixed-point free automorphisms of the Leech lattice satisfying this condition and algebraic conjugacy classes of non-Fricke elements in the Monster.  We use the duality to show that non-Fricke Monstrous Lie algebras are Borcherds-Kac-Moody Lie algebras, and prove a refinement of Norton's Generalized Moonshine conjecture: the ambiguous constants relating generalized moonshine Hauptmoduln under conjugation and modular transformations are necessarily roots of unity.  We also describe a class of rank 2 Borcherds-Kac-Moody Lie algebras attached to the Conway group.
\end{abstract}

\maketitle

\tableofcontents

\subsection{Introduction}

In this paper, we extend the known cyclic orbifold constructions of the Monster vertex operator algebra $V^\natural$ to cover all fixed-point free automorphisms of the Leech lattice satisfying a ``no massless states'' condition, proving an extended version of a 1993 conjecture of Tuite.  By orbifold duality, we find that these cases exhaust all possible cyclic orbifold constructions of $V^\natural$.  The first construction along these lines was the original construction by \cite{FLM88}, using the $-1$ involution, and this construction gave birth to the notion of orbifold conformal field theory.  Some candidate constructions using prime order elements were proposed in the 1990s by \cite{M95} and a proof in the order 3 case was announced in \cite{DM94}.  However, proofs have only been fully written up relatively recently: in \cite{CLS16} for the order 3 case, and in \cite{ALY17} for the remaining primes 5, 7, and 13, after the mathematical foundations of the cyclic orbifold construction were settled in \cite{vEMS}.  We complete this particular picture by considering orbifolds for all fixed-point free automorphisms of the Leech lattice that satisfy Tuite's conditions.  Furthermore, we show that the ``anomaly-free'' condition can be removed using a more general orbifold construction.  We note that there are other, non-orbifold constructions of $V^\natural$ in the literature: \cite{GL11}, \cite{M97}.

In a pair of remarkable papers \cite{T92, T93}, Tuite argued using physical considerations that the genus zero property of McKay-Thompson series in Monstrous Moonshine follows from some conjectured properties of the Moonshine module $V^\natural$, namely:
\begin{enumerate}
\item If $g$ is anomaly-free and $T_g$ is Fricke-invariant, then the cyclic orbifold $V^\natural/g$ is isomorphic to $V^\natural$.
\item If $g$ is anomaly-free and $T_g$ is not Fricke-invariant, then the cyclic orbifold $V^\natural/g$ is isomorphic to the Leech lattice vertex operator algebra $V_\Lambda$.
\end{enumerate}
Furthermore, by analyzing twisted sectors of $V_\Lambda$, he gave strong evidence for a conjecture dual to the second assertion, that if $g$ is a fixed-point free automorphism of $\Lambda$ satisfying the ``no massless states'' condition, with an anomaly-free lift to an automorphism of $V_\Lambda$, then $V_\Lambda/g \cong V^\natural$.  Since there are 38 algebraic conjugacy classes of suitable fixed-point free automorphisms, this would give 38 constructions of $V^\natural$.

Recent progress in the theory of vertex operator algebras and orbifold constructions make it reasonable to revisit these conjectures.  In particular, \cite{vEMS} showed that the cyclic orbifold construction $V \mapsto V/g$ is well-defined for any anomaly-free finite order automorphism $g$ of a simple, $C_2$-cofinite, holomorphic vertex operator algebra of CFT type (such as $V^\natural$ and $V_\Lambda$).  Using this result, together with a method inspired by string-theoretic T-duality, the first conjecture, about Fricke automorphisms, was settled in \cite{PPV17}.  In this paper, we resolve the second conjecture, and therefore its dual.  We also use an unrolling method to define a generalization of the orbifold construction to allow automorphisms with anomalies.  This gives us 13 extra constructions of $V^\natural$, for a total of 51, and it yields a duality between fixed-point free algebraic conjugacy classes in $Co_1$ satisfying the ``no massless states'' condition and non-Fricke classes in $\bM$.  This duality was first conjectured in vague terms in Section 7 of \cite{CN79}, and further developed with explicit calculations on the level of genus zero modular functions in \cite{K85}.  Tuite then developed it into a statement about orbifold conformal field theory.

We remark that \cite{PPV16} not only gives a strong physical argument for the validity of Tuite's conjectures using string-theoretic tools, but also gives a new physical interpretation of the Atkin-Lehner involutions that yield the genus zero property in terms of T-duality.  Some of their algebraic manipulations are quite similar to ours, since they use the same Borcherds products, but where they have an elegant argument for isomorphisms $V^\natural/g \cong V_\Lambda$ based on physical reasoning, we have been unable to avoid a direct computation.

We now consider applications of the orbifold duality.  First, we show that all of the monstrous Lie algebras $\fm_g$ constructed in \cite{GM4} are Borcherds-Kac-Moody Lie algebras, using the orbifold correspondence to work out the non-Fricke case that was left over from the earlier paper.  We will not say much about the general theory of Borcherds-Kac-Moody Lie algebras, except to note that they are similar in many ways to finite dimensional simple Lie algebras: they are generated by $\fsl_2$-triples with relations encoded by a Cartan matrix, and their highest-weight representations satisfy a generalization of the Weyl character formula.  From section 4 of \cite{GM2}, if we have a homogeneous group action on such a Lie algebra, we obtain a twisted denominator formula, using the BGG-type methods of \cite{J04}.  We note that in the Fricke case, we recently showed in \cite{Fricke} that the BGG-type methods are avoidable using a decomposition introduced in \cite{J98}.  However, in the non-Fricke case, the infinitely many norm zero simple roots make such arguments difficult.

I was unable to prove the Borcherds-Kac-Moody condition in full in \cite{GM4}, because I did not know how to show that the norm zero simple roots commute in the non-Fricke case.  This commutativity turns out to follow from the commutativity of the weight 1 Lie algebra in $V_\Lambda$ through the orbifold correspondence, together with a short manipulation using a string quantization functor.  From the twisted denominator identities of $\fm_g$, we prove a refinement of the main theorem in \cite{GM4}, i.e., the resolution of Norton's Generalized Moonshine conjecture.  Norton's original statement of the conjecture \cite{N87} noted the presence of some ambiguous constants relating the Generalized Moonshine functions, and suggested that they are roots of unity.  Our original proof in \cite{GM4} only implies the constants are nonzero complex numbers, but here we obtain roots of unity whenever the functions are non-constant.

We conclude with a brief analysis of rank 2 Lie algebras attached to $g^*$-twisted modules of $V_\Lambda$ where $g^*$ acts without fixed points on $\Lambda$.  The constructions are analogous to the Monstrous Lie algebras $\fm_g$ constructed in \cite{GM4}, and we show using the orbifold correspondence that they are in fact isomorphic.  We briefly speculate on an analogue of generalized moonshine for $V_\Lambda$, at least for fixed-point free twists.

\subsection{Overview}

In section \ref{sect:defs}, we give basic definitions from the theory of vertex operator algebras and abelian intertwining algebras.
In section \ref{sect:lifts}, we give an overview of Tuite's conditions on automorphisms of the Leech lattice $\Lambda$.
In section \ref{sect:anomalous}, we generalize the cyclic orbifold construction given in \cite{vEMS} to allow for anomalous conformal weights.
In section \ref{sect:non-fricke}, we show that the cyclic orbifold of $V^\natural$ by a non-Fricke element is isomorphic to $V_\Lambda$, with the dual automorphism $g^*$ fixed-point free, and hence, the corresponding cyclic orbifold of $V_\Lambda$ by $g^*$ is isomorphic to $V^\natural$.
In section \ref{sect:monstrous}, we show that the monstrous Lie algebras $\fm_g$ constructed in \cite{GM4} are Borcherds-Kac-Moody for all $g \in \bM$.  The Fricke case was done in \cite{GM4}, so we resolve the non-Fricke case here.  We use this to show that all generalized moonshine functions are Hecke-monic.
In section \ref{sect:generalized}, we use the Hecke-monic property to prove a refinement of our resolution of Norton's Generalized Moonshine conjecture.
In section \ref{sect:conway}, we analyze a class of Lie algebras that are constructed in a manner similar to the monstrous Lie algebras, using abelian intertwining algebras attached to fixed-point free automorphisms of $\Lambda$ satisfying the ``no massless states'' condition.
In section \ref{sect:appendix}, we produce a table of non-anomalous non-Fricke elements, the non-negative eta-product expansions of their McKay-Thompson series, norm zero root multiplicities of the corresponding Lie algebras, and the corresponding elements of $Co_0$ in GAP and ATLAS notations.

\subsection{Acknowledgments}

The author would like to thank Toshiyuki Abe, Terry Gannon, and Michael Tuite for helpful discussions, and Ching-Hung Lam, Sven M\"oller, and Hiroki Shimakura for pointing out errors in an earlier version of this article.  This research was partly funded by JSPS Kakenhi Grant-in-Aid for Young Scientists (B) 17K14152.

\section{Vertex algebras and abelian intertwining algebras} \label{sect:defs}

We introduce some basic terminology for vertex algebras \cite{B86}, vertex operator algebras \cite{FLM88}, and abelian intertwining algebras \cite{DL93}.  We will not explore the theories behind these objects much, because the proofs of our results do not require us to use any substantial details beyond what was established in the literature.  However, we will give brief definitions.

\begin{defn}
A vertex algebra over $\bC$ is a complex vector space $V$ equipped with a distinguished vector $\unit \in V$, a distinguished linear transformation $T: V \to V$, and a left multiplication map $Y: V \to (\End V)[[z,z^{-1}]]$, written $Y(a,z) = \sum_{n \in \bZ} a_n z^{-n-1}$ satisfying the following conditions:
\begin{enumerate}
\item $Y(\unit, z) = id_V = id_V z^0$, and $Y(a,z)\unit \in a + zV[[z]]$.
\item For any $a,b \in V$, $Y(a,z)b \in V((z))$, i.e., $a_n b = 0$ for $n$ sufficiently large.
\item $[T, Y(a,z)] = \frac{d}{dz} Y(a,z)$
\item The Jacobi identity: for any $a,b \in V$, 
\[ x^{-1} \delta\left(\frac{y-z}{x}\right) Y(a,y) Y(b,z) - x^{-1} \delta\left(\frac{z-y}{-x}\right) Y(b,z) Y(a,y) =  
z^{-1} \delta\left(\frac{y-x}{z}\right) Y(Y(a,x)b,z) \]
where $\delta(z) = \sum_{n \in \bZ} z^n$ and $\delta(\frac{y-z}{x})$ is expanded as a formal power series in $z$.
\end{enumerate}
An automorphism of a vertex algebra $V$ is a linear transformation $g: V \to V$ that fixes $\unit$, and satisfies $Y(ga,z)gb = gY(a,z)b$ for all $a,b \in V$.
\end{defn}

\begin{defn}
A conformal vertex algebra of central charge $c \in \bC$ is a vertex algebra equipped with a distinguished nonzero vector $\omega$, satisfying the following conditions:
\begin{enumerate}
\item If we write $Y(\omega,z) = \sum_{n \in \bZ} L_n z^{-n-2}$, then the coefficients $L_n \in \End V$ satisfy the Virasoro relations:
\[ [L_m, L_n] = (m-n)L_{m+n} + c \frac{m^3-m}{12} \delta_{m+n,0} id_V \]
\item $L_0$ acts semisimply on $V$, with integer eigenvalues.
\item $L_{-1} = T$.
\end{enumerate}
An automorphism of a conformal vertex algebra is an automorphism of the underlying vertex algebra that fixes $\omega$.  A vertex operator algebra is a conformal vertex algebra for which the eigenvalues of $L_0$ are bounded below and have finite multiplicity.
\end{defn}

Vertex algebras have a more complicated representation theory than ordinary algebras, because modules may be twisted by automorphisms of the vertex algebra.  Here and for the remainder of the paper, we use the notation $e(x)$ to denote the normalized exponential $e^{2\pi i x}$.

\begin{defn}
Let $V$ be a vertex algebra over $\bC$, and let $g$ be an automorphism of order $n$.  Then a $g$-twisted $V$-module is a complex vector space $M$ equipped with an action map $Y^M: V \to (\End M)[[z^{1/n}, z^{-1/n}]]$, written $Y^M(a,z) = \sum_{k \in \frac{1}{n} \bZ} a_k z^{-k-1}$ satisfying the following conditions:
\begin{enumerate}
\item $Y^M(\unit,z) = id_M$
\item For any $a \in V$, $u \in M$, $Y^M(a,z)u \in M((z^{1/n}))$, i.e., $a_k u = 0$ if $k$ is sufficiently large.
\item If $a \in V$ satisfies $ga = e(k/n)a$ for some $k \in \bZ$, then $Y^M(a,z) \in z^{k/n}(\End M)[[z,z^{-1}]]$.
\item The twisted Jacobi identity holds: if $ga = e(k/n)a$, then for any $b \in V$,
\[ \begin{aligned}
x^{-1} \delta\left(\frac{y-z}{x}\right) &Y^M(a,y) Y^M(b,z) - x^{-1} \delta\left(\frac{z-y}{-x}\right) Y^M(b,z) Y^M(a,y) =  \\
&= z^{-1} \left(\frac{y-x}{z} \right)^{-k/n} \delta\left(\frac{y-x}{z}\right) Y^M(Y(a,x)b,z)
\end{aligned} \]
\end{enumerate}
If $V$ is a vertex operator algebra, these conditions define the notion of ``weak $g$-twisted $V$-module''.  An ``admissible $g$-twisted $V$-module'' is a weak $g$-twisted $V$-module that admits a $\frac{1}{n}\bZ_{\geq 0}$-grading that is compatible with the $L_0$-grading on $V$.  An ``ordinary $g$-twisted $V$-module'' is an admissible $g$-twisted $V$-module for which $L_0$ acts semisimply, with finite dimensional eigenspaces, and eigenvalues that are bounded below in each coset of $\bZ$ in $\bC$.  When $g=1$, we replace ``$g$-twisted $V$-module'' with ``$V$-module''.
\end{defn}

\begin{defn}
Here are a few more technical conditions about vertex algebras.  We won't use them in any specific way, but they appear as conditions in the theorems we need.
\begin{enumerate}
\item A vertex algebra $V$ is $C_2$-cofinite if the subspace $C_2(V)$ spanned by $\{ a_{-2} b | a,b \in V\}$ satisfies the property that $V/C_2(V)$ is finite dimensional.
\item A vertex operator algebra $V$ is holomorphic if all admissible $V$-modules are isomorphic to direct sums of $V$.
\item A vertex operator algebra $V$ is of CFT type if $L_0$ has only non-negative eigenvalues, and the kernel of $L_0$ is spanned by $\unit$.
\end{enumerate}
\end{defn}

\begin{defn} (\cite{DL93})
Let $A$ be an abelian group, and let $(F,\Omega)$ be a normalized abelian 3-cocycle on $A$, i.e., a pair of maps $F:A^{\oplus 3} \to \bC^\times$, $\Omega: A^{\oplus 2} \to \bC^\times$ satisfying:
\begin{enumerate}
\item (pentagon) $F(i,j,k) F(i,j+k,\ell) F(j,k,\ell) = F(i+j,k,\ell) F(i,j,k+\ell)$
\item (hexagon) $F(i,j,k)^{-1}\Omega(i,j+k) F(j,k,i)^{-1} = \Omega(i,j) F(j,i,k)^{-1} \Omega(i,k)$
\item (hexagon) $F(i,j,k) \Omega(i+j,k) F(k,i,j) = \Omega(j,k) F(i,k,j) \Omega(i,k)$
\item (normalization) $F(i,j,0) = F(i,0,k) = F(0,j,k) = 1$ and $\Omega(i,0) = \Omega(0,j) = 1$
\end{enumerate}
for all $i,j,k,\ell \in A$.  We define the bilinear form $b_\Omega: A \times A \to \bC/\bZ$ by $e(b_\Omega(a,b)) = \frac{\Omega(a+b,a+b)}{\Omega(a,a)\Omega(b,b)}$.  Then an abelian intertwining algebra of level $N \in \bZ_{\geq 1}$ and central charge $c \in \bQ$ associated to the datum $(A, F, \Omega)$ is a complex vector space $V$ equipped with
\begin{enumerate}
\item a $\frac{1}{N}\bZ \times A$-grading $V = \bigoplus_{n \in \frac{1}{N} \bZ} V_n = \bigoplus_{i \in A} V^i = \bigoplus_{n \in \frac{1}{N} \bZ, i \in A} V_n^i$
\item a left-multiplication $Y: V \to (\End V)[[z^{1/N},z^{-1/N}]]$ written $Y(a,z) = \sum_{n \in \frac{1}{N}\bZ} a_n z^{-n-1}$, and
\item distinguished vectors $\unit \in V_0^0$ and $\omega \in V_2^0$
\end{enumerate}
satisfying the following conditions for any $i,j,k \in A$, $a \in V^i$, $b \in V^j$, $u \in V^k$, and $n \in \frac{1}{N}\bZ$:
\begin{enumerate}
\item $a_n b \in V^{i+j}$.
\item $a_n b = 0$ for $n$ sufficiently large.
\item $Y(\unit, z)a = a$
\item $Y(a,z)\unit \in a + zV[[z]]$.
\item  $Y(a,z)b = \sum_{k \in b_\Omega(i,j) + \bZ} a_k b z^{-k-1}$.
\item The Jacobi identity:
\[ \begin{aligned}
 x^{-1} &\left(\frac{y-z}{x} \right)^{b_\Omega(i,j)} \delta\left(\frac{y-z}{x}\right) Y(a,y) Y(b,z) u \\
 &- B(i,j,k) x^{-1} \left(\frac{z-y}{e^{i\pi}x} \right)^{b_\Omega(i,j)} \delta\left(\frac{z-y}{-x}\right) Y(b,z) Y(a,y) u = \\
& \qquad = F(i,j,k) z^{-1} \delta\left(\frac{y-x}{z}\right) Y(Y(a,x)b,z) \left(\frac{y-x}{z} \right)^{-b_\Omega(i,k)} u 
\end{aligned}\]
where $B(i,j,k) = \frac{\Omega(i,j)F(i,j,k)}{F(j,i,k)}$. 
\item The coefficients of $Y(\omega,z) = \sum_{k \in \bZ} L_k z^{-k-2}$ satisfy the Virasoro relations at central charge $c$.
\item $L_0 a = na$ if $a \in V_n$.
\item $\frac{d}{dz} Y(a,z) = Y(L_{-1}a,z)$.
\end{enumerate}
The type of a quadratic form is the pair $(A,q)$, where $q: A \to \bC/\bZ$ is the unique quadratic form satisfying $e(q(a)) = \Omega(a,a)$ for all $a \in A$.
\end{defn}

Abelian intertwining algebras of type $(A,q)$ form a category $AIA_{(A,q)}$ - see \cite{GM4} section 2.2 for details.

The standard example of an abelian intertwining algebra, described in detail in Chapter 12 of \cite{DL93}, is that of a rational lattice, i.e., a free abelian group $L$ of finite rank equipped with a non-degenerate $\bQ$-valued quadratic form $Q$.  By non-degeneracy of $Q$, there is some $r,s \in \bZ_{\geq 0}$ such that $L \otimes_{\bZ} \bR \cong \bR^{r,s}$.  The corresponding abelian intertwining algebra $V_L$ is constructed as a sum $\bigoplus_{\lambda \in L} \pi^{r,s}_\lambda$ of irreducible modules of the Heisenberg Lie algebra $(L \otimes_{\bZ} \bC[z,z^{-1}]) \oplus \bC K$.  Here, $\pi^{r,s}_\lambda$ is generated by a vector $|\lambda\rangle$ on which $L \otimes_{\bZ}z\bC[z]$ acts by zero, elements $\mu \in L \otimes z^0$ act by the scalar $(\mu,\lambda)$, and $K$ acts by identity.  Multiplication operations between Heisenberg modules are given by intertwining operators, which are unique up to a constant.  Any normalized choice of intertwining operators then yields a suitable 3-cocycle.

\section{Fixed-point free automorphisms yielding no massless states} \label{sect:lifts}

We consider the vertex operator algebra $V_\Lambda$ attached to the Leech lattice $\Lambda$.  If $g$ is a fixed-point free automorphism of $\Lambda$, then all lifts of $g$ to automorphisms of $V_\Lambda$ are conjugate to each other.  I believe this is well-known to experts, but I couldn't find anything close to an explicit statement in the literature before this year.

\begin{prop} (\cite{LS17})
Let $g$ be an automorphism of $\Lambda$ such that $(\Lambda \otimes \bC)^g = \{0\}$.  Then a lift of $g$ to an automorphism of $V_\Lambda$ exists, and for any two lifts $\hat{g}, \tilde{g}$ of $g$ to automorphisms of $V_\Lambda$, there exists some $h \in \Aut V_\Lambda$ such that $h\hat{g} = \tilde{g}h$.
\end{prop}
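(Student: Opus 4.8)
The plan is to use the well-understood structure of $\Aut V_\Lambda$ and to reduce the conjugacy statement to a linear-algebra computation that is solvable precisely because $g$ is fixed-point free.

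First I would record the relevant structure theory. Since the Leech lattice has minimal norm $4$, it has no vectors of norm $2$, so the weight-one space $(V_\Lambda)_1 = \fh := \Lambda \otimes \bC$ is abelian, equal to the Heisenberg Cartan. For $h \in \fh$ the zero mode $h_0$ acts on each $\pi_\lambda$ by the scalar $(h,\lambda)$, so $\sigma_h := \exp(2\pi i\, h_0)$ is an automorphism acting on $\pi_\lambda$ by $e((h,\lambda))$; these form a torus $T \cong \Hom(\Lambda, \bC^\times)$ of inner automorphisms. The standard description of $\Aut V_\Lambda$ (FLM, Dong--Nagatomo) gives exact sequences $1 \to T \to \Aut V_\Lambda \to O(\hat\Lambda) \to 1$ and $1 \to \Hom(\Lambda, \{\pm1\}) \to O(\hat\Lambda) \to O(\Lambda) \to 1$, where $\hat\Lambda$ is the canonical central extension. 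Because $\Lambda$ is unimodular, each sign character $\chi \in \Hom(\Lambda, \{\pm1\})$ is realized by $\sigma_h$ for a suitable $h \in \tfrac12\Lambda$, so in fact every automorphism inducing the identity on $\Lambda$ already lies in $T$.

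For existence, I would lift $g$ first to $\hat g \in O(\hat\Lambda)$. Since $g$ is an isometry it preserves the commutator pairing $(a,b) \bmod 2$ that determines the class of the extension $\hat\Lambda$, so such a lift exists (FLM, Ch.~5); the resulting linear map on $V_\Lambda = \bigoplus_\lambda \pi_\lambda$ fixes $\unit$ and $\omega$ --- the latter because $\omega$ is built from $\fh$ and is invariant under $O(\fh)$ --- and is compatible with $Y$, hence is a genuine automorphism lifting $g$. Note that fixed-point freeness is not needed for this step; it enters only in the conjugacy claim.

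Given two lifts $\hat g, \tilde g$, both induce $g$ on $\Lambda$, so $\tilde g \hat g^{-1} \in T$ by the first paragraph; write $\tilde g = \sigma_{h_0}\hat g$ for some $h_0 \in \fh$. I look for a conjugator inside $T$, say $h = \sigma_{h'}$. Since $\hat g$ acts on $\fh$ as $g$, one has $\hat g \sigma_{h'} \hat g^{-1} = \sigma_{gh'}$, and therefore
\[ \sigma_{h'}\, \hat g\, \sigma_{h'}^{-1} = \sigma_{h'}\sigma_{-gh'}\,\hat g = \sigma_{(1-g)h'}\, \hat g. \]
Thus $\sigma_{h'}\hat g\sigma_{h'}^{-1} = \tilde g$ as soon as $(1-g)h' = h_0$. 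Because $g$ is fixed-point free, $1$ is not an eigenvalue of $g$ on $\fh$, so $1-g$ is invertible and I may set $h' := (1-g)^{-1} h_0$; then $h := \sigma_{h'}$ satisfies $h\hat g = \tilde g h$, as required. The genuine work here is the structural input of the first paragraph --- ensuring that the difference of two lifts lands in the torus $T$, including the half-lattice/unimodularity bookkeeping that absorbs the sign ambiguities of $O(\hat\Lambda)$ into $T$. Once that is in place, the fixed-point-free hypothesis makes the conjugacy immediate via the invertibility of $1-g$, so I expect the structural reduction, rather than the final computation, to be the main obstacle.
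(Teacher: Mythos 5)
Your proposal is correct and follows essentially the same route as the paper: both reduce to the fact (via Dong--Nagatomo) that two lifts differ by an exponential of a zero mode $a_0$ with $a \in \Lambda\otimes\bC$, and both use fixed-point freeness to absorb that difference by conjugation --- you by inverting $1-g$, the paper by exhibiting the explicit averaged solution $b=\sum_i \tfrac{i}{|g|}g^i(a)$ of $g(b)-b=a$. The only cosmetic difference is that you unpack the structure of $\Aut V_\Lambda$ to justify that the difference of lifts lands in the torus, where the paper simply cites the relevant theorem.
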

\begin{proof}
This follows immediately from the results of \cite{LS17} section 4.2, but we offer a brief outline of the reasoning.  By \cite{L85}, any finite order automorphism admits a lift.  By Theorem 2.1 of \cite{DN98}, for any two lifts $\hat{g}, \tilde{g}$ of $g$ to automorphisms of $V_\Lambda$, there exists $a \in \Lambda \otimes \bC$ such that $\hat{g} = e^{a_0}\tilde{g}$, and exponentials satisfy the commutation relation $e^{a_0} \tilde{g} = \tilde{g}e^{g^{-1}(a)_0}$.  Since $g$ is fixed-point free, there is some $b \in \Lambda \otimes \bC$ such that $g(b)-b = a$, e.g., we may take $b = \sum_{i=0}^{|g|-1} \frac{i}{|g|}g^i(a)$.  Thus,
\[ \hat{g} = e^{a_0}\tilde{g} = e^{-b_0}e^{g(b)_0}\tilde{g} = e^{-b_0} \tilde{g} e^{b_0} \]
and the two lifts are conjugate by a lift of the identity on $\Lambda$.
\end{proof}

Naturally, this proof extends to fixed-point free automorphisms of any even lattice that has no roots.

In section 3.3 of \cite{T93}, we are introduced to 3 conditions on automorphisms $a$ of $\Lambda$.  Letting $n$ be the order of the automorphism $a$, we decompose the characteristic polynomial of $a$ as a product $\det(xI_{24} - a) = \prod_{k|n} (x^k-1)^{a_k}$ for uniquely defined integers $a_k$.  Following \cite{F72}, we define the Frame shape of $a$ as the ``generalized permutation'' $\prod_{k|n} k^{a_k} = 1^{a_1}\cdots n^{a_n}$.  Then we define the ``$a$-twisted vacuum energy'' $E_0^a = -\sum_{k|n} \frac{a_k}{k}$, and consider the following three conditions:
\begin{enumerate}
\item (fixed-point free) $\sum_{k|n} a_k = 0$ 
\item (no massless states) $E_0^a > 0$
\item (anomaly-free) $E_0^a \in \frac{1}{n}\bZ$
\end{enumerate}

By a short calculation, one can show that the first condition is equivalent to $\dim (\Lambda \otimes \bC)^a = 0$, and that $E_0^a + 1$ is the lowest $L_0$-eigenvalue of the irreducible $a$-twisted $V_\Lambda$-module $V_\Lambda(a)$.  In order for the orbifold $V_\Lambda/a$ to be isomorphic to $V^\natural$, and in particular to have no weight 1 space, it is necessary that $a$ be fixed-point free (giving no contribution from $V_\Lambda$) and that $E_0^a > 0$ (giving no contribution from the $a$-twisted module $V_\Lambda(a)$).  In fact, it is also necessary that all other nontrivially twisted modules yield no weight 1 contribution, meaning the lift of $a$ on $V_\Lambda(a^j)$ acts fixed-point freely on the weight 1 subspace.  I do not know a way to rewrite this additional condition in terms of Frame shape, but we will find experimentally that it does not eliminate any new classes.

A brief examination of the character table of $Co_0$ yields 165 conjugacy classes of elements, which form 160 algebraic conjugacy classes (where classes are algebraically conjugate if a Galois automorphism of a suitable cyclotomic extension of $\bQ$ permutes the corresponding columns of the character table).  By examining frame shapes in \cite{K85}, we find that there are 95 conjugacy classes of fixed-point free elements, which form 90 algebraic conjugacy classes.  A more detailed examination by Tuite showed that there are 53 conjugacy classes satisfying the first two conditions, which form 51 algebraic conjugacy classes.  Furthermore, 40 conjugacy classes satisfy all three conditions, and they form 38 algebraic conjugacy classes.  We note that Frame shapes only detect algebraic conjugacy.

If we define the modular forms $\eta_a(\tau)$ by $\prod_{k|n} \eta(k\tau)^{a_k}$ and $\theta_a(\tau)$ by the theta function of the fixed-point sublattice $\Lambda^a$, then in section 7 of \cite{CN79}, we find the claim that $\theta_a(\tau)/\eta_a(\tau)$ is the McKay-Thompson series of some element of $\bM$.  In this paper, we are only concerned with the special case that $a$ is fixed-point free, so we simply consider $1/\eta_a(\tau)$.  One may find all of the relevant Frame shapes in Tables 1 and 2 of \cite{T93}, and one may see that they are the reciprocals of the corresponding eta expansions of McKay-Thompson series in the appendix to this paper.
tui
\section{Anomalous orbifolds} \label{sect:anomalous}

We recall from Theorem 10.3 of \cite{DLM97} that if $V$ is a simple, $C_2$-cofinite, holomorphic vertex operator algebra $V$, and $g$ is an automorphism of finite order $n$, then there is a unique $g$-twisted $V$-module, up to isomorphism (which we will call $V(g)$), and its $L(0)$-spectrum lies in $\frac{t}{n^2} + \frac{1}{n}\bZ$ for some uniquely determined $t \in \bZ/n\bZ$.  We say that $g$ is anomalous if $t \neq 0$, and we say that $g$ is anomaly-free if $t = 0$.

The key construction for this paper is the following: By \cite{vEMS} Theorem 5.15, if $V$ is a simple, $C_2$-cofinite, holomorphic vertex operator algebra $V$ of CFT type, and $g$ is an automorphism of finite order $n$, such that the nontrivial irreducible twisted modules $V(g^i)$ have strictly positive $L(0)$-spectrum, then there is an abelian intertwining algebra structure on ${}^gV = \bigoplus_{i=0}^{n-1} V(g^i)$, graded by an abelian group $D$ that lies in an exact sequence $0 \to \bZ/n\bZ \to D \to \bZ/n\bZ \to 0$, with addition law determined by the ``add with carry'' 2-cocycle
\[ c_{2t}(i,k) = \begin{cases} 0 & i_n + k_n < n \\ 2t & i_n + k_n \geq n \end{cases} \]
where the notation $i_n$ denotes the unique representative of $i \in \bZ/n\bZ$ in $\{0,\ldots,n-1\}$.  By \textit{loc. cit.} Proposition 5.13, the quadratic form on $D$ is isomorphic to the discriminant form on the even lattice with Gram matrix $\left( \begin{smallmatrix} -2t_n & n \\ n & 0 \end{smallmatrix} \right)$.

Furthermore, by Theorem 5.16, if $t=0$ (i.e., $g$ is anomaly-free), then the abelian intertwining algebra ${}^gV$ is naturally graded by $\bZ/n\bZ \times \bZ/n\bZ$, such that $V$ is the sum of the degree $(0,i)$ pieces, and that there is a simple $C_2$-cofinite, holomorphic vertex operator algebra $V/g$ of CFT type given by the sum of the degree $(j,0)$ pieces (for $0 \leq j < n$).  The natural $\bZ/n\bZ$-grading from this decomposition endows $V/g$ with a canonical automorphism $g^*$ whose order is equal to $|g|$, such that $(V/g)/g^* \cong V$ and $g^{**} = g$.  We remove the condition on $t$ in this theorem using the following unrolling construction given in \cite{GM4}.

\begin{defn} (\cite{GM4} Definition 2.3.3)
Given a diagram $(A, q) \overset{\pi}{\twoheadleftarrow} (A', q') \overset{i}{\hookrightarrow} (A'',q'')$ of quadratic spaces, we define the unrolling functor $AIA_{(A,q)} \to AIA_{(A'',q'')}$ to be the composite of the following two functors:
\begin{itemize}
\item Pullback along $\pi$: For any $a' \in A'$, the degree $a'$ part of the pullback abelian intertwining algebra is equal to the degree $\pi(a')$ part of the source abelian intertwining algebra.  Multiplication is defined in the obvious way.
\item Extension by zero: The degree $a''$ part of the target abelian intertwining algebra is zero if $a'' \notin i(A')$, and equal to the degree $a'$ part of the source abelian intertwining algebra if $a'' = i(a')$.  Multiplication is defined in the obvious way.
\end{itemize}
\end{defn}

\begin{prop} (\cite{GM4} Proposition 2.3.4)
Let $V$ be a simple, $C_2$-cofinite, holomorphic vertex operator algebra $V$ of CFT type, and let $g$ be an automorphism of finite order $n$, such that the nontrivial twisted modules $V(g^i)$ have strictly positive conformal weight.  Let $N = \frac{n^2}{(n,t)}$, where $t$ is given in the introduction of the section.  Then we may unroll ${}^g V$ along the diagram
\[ (D, q) \twoheadleftarrow \langle (1, \frac{t_n}{(n,t)}), (0, \frac{n}{(n,t)}) \rangle \hookrightarrow \bZ/N\bZ \times \bZ/N\bZ \]
where the first arrow is given by $a(1, \frac{t_n}{(n,t)}) + b(0, \frac{n}{(n,t)}) \mapsto (a_n, b_n + 2t_n\lfloor a/n \rfloor)$, and the quadratic form on $\bZ/N\bZ \times \bZ/N\bZ$ is given by $(a,b) \mapsto \frac{ab}{N}$.  Unrolling then
yields a $\bZ/N\bZ \times \bZ/N\bZ$-graded abelian intertwining algebra structure on ${}^g_N V = \bigoplus_{i=0}^{N-1} V(g^i)$, where the first copy of $\bZ/N\bZ$ parametrizes twisting, and the second is determined by the eigenvalues of the unique homogeneous automorphism $\tilde{g}$ of order $N$ that acts on $V$ by $g$, and on $V(g)$ by $e(L_0) = e^{2\pi i L_0}$.  Furthermore, if the central charge of $V$ is a multiple of 24, then the characters of these graded pieces of ${}^g_N V$ form a vector-valued modular function $F^g = \{F^g_{i,j}(\tau)\}_{i,j \in \bZ/N\bZ}$ of type $\rho_{I\!I_{1,1}(N)}$.
\end{prop}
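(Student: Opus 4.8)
The plan is to address the proposition's four claims in sequence: (1) the displayed span $(D,q)\twoheadleftarrow(A',q')\hookrightarrow(\bZ/N\bZ\times\bZ/N\bZ,q'')$ is a legitimate diagram of quadratic spaces, so that Definition 2.3.3 applies; (2) unrolling ${}^gV$ along it produces the asserted graded abelian intertwining algebra structure on ${}^g_NV$; (3) the two copies of $\bZ/N\bZ$ carry the stated meaning; and (4) the resulting characters are modular of type $\rho_{I\!I_{1,1}(N)}$. The essential work lies in (1) and (3), which are arithmetic bookkeeping with the ``add with carry'' cocycle $c_{2t}$; step (2) is then formal, and (4) will be imported from the known modular invariance of twisted trace functions.

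For (1), write $d=(n,t)$, so that $N=n^2/d$. I would first record the group structure of $A'$: the generators $e_1=(1,\tfrac{t_n}{d})$ and $e_2=(0,\tfrac{n}{d})$ have orders $N$ and $n$ respectively, and $\langle e_1\rangle\cap\langle e_2\rangle=0$, so $A'\cong\bZ/N\bZ\times\bZ/n\bZ$ has order $n^3/d$. Next I would check that the prescribed $\pi$ is a well-defined homomorphism onto $D$: it suffices that $\pi(e_1)=(1,0)$ and $\pi(e_2)=(0,1)$ respect the relations $Ne_1=0$, $ne_2=0$, which is precisely where the carry term $2t_n\lfloor a/n\rfloor$ is needed, since iterating $(1,0)$ in $D$ shifts the second coordinate by $2t$ every time the first wraps past $n$ (the content of $c_{2t}$). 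Surjectivity is then immediate and $|\Ker\pi|=n/d$, consistent with ${}^g_NV$ being $n/d$ copies of ${}^gV$. The crux is the quadratic-form compatibility $q\circ\pi=q''|_{A'}$: using the inverse of $\left(\begin{smallmatrix}-2t_n&n\\n&0\end{smallmatrix}\right)$ I would write $q$ explicitly on the generators of $D$ dual to $(1,0),(0,1)$ and match it, term by term, against $q''(i(a'))=\tfrac{a(at_n/d+bn/d)}{N}\bmod\bZ$ for $a'=ae_1+be_2$; the generating configurations give values $\tfrac{t_n}{n^2}$ on $(1,0)$, $0$ on $(0,1)$, and $\tfrac{t_n}{n^2}+\tfrac1n$ on $(1,1)$, which agree on both sides.

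With the diagram in hand, (2) follows by applying the pullback-then-extension functor of Definition 2.3.3 to ${}^gV\in AIA_{(D,q)}$ (using surjectivity of $\pi$ and injectivity of $i$), giving a $\bZ/N\bZ\times\bZ/N\bZ$-graded abelian intertwining algebra supported on $i(A')$ with $({}^g_NV)_{i(a')}=({}^gV)_{\pi(a')}$. For (3), the first component of $\pi$ is $a\mapsto a_n$, so the first $\bZ/N\bZ$-coordinate $a$ of $i(a')$ records the twisting sector $V(g^{a\bmod n})=V(g^a)$; letting $a$ run over $0,\dots,N-1$ recovers $\bigoplus_{i=0}^{N-1}V(g^i)$. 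For the second coordinate I would introduce $\tilde g$, acting by the character $(a,y)\mapsto e(y/N)$ of $\bZ/N\bZ\times\bZ/N\bZ$, and check it is a homogeneous automorphism of order exactly $N$: its eigenvalue exponents are generated by $\tfrac nd$ (the untwisted $g$-eigenvalues) and $\tfrac{t_n}{d}$ (the anomalous offset in the $V(g)$ sector), and $\gcd(\tfrac nd,\tfrac{t_n}{d})=1$. Matching then gives that on $V$ (first coordinate $\equiv0\bmod n$) the phases $e(y/N)$ with $y\in\tfrac nd\bZ$ reproduce $g$, while in the $V(g)$ sector the $e_1$-datum $\tfrac{t_n}{d}$ reproduces the anomalous phase $e(\tfrac{t_n}{n^2})$, so the second coordinate of $i(a')$ is the $\tilde g$-eigenvalue exponent.

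Finally, for (4) I would invoke the modular invariance of twisted trace functions for simple, holomorphic, $C_2$-cofinite vertex operator algebras in the abelian-intertwining-algebra setting: the vector $\{F^g_{i,j}\}$ transforms under $SL_2(\bZ)$ by the Weil representation attached to the discriminant form $(\bZ/N\bZ\times\bZ/N\bZ,\tfrac{ab}{N})$, which is exactly the discriminant form of the even lattice $I\!I_{1,1}(N)=\left(\begin{smallmatrix}0&N\\N&0\end{smallmatrix}\right)$; this identifies the type as $\rho_{I\!I_{1,1}(N)}$. The hypothesis $24\mid c$ trivializes the $q^{-c/24}$ multiplier (and the signature of $I\!I_{1,1}(N)$ is $0$), so the components are honest weight-zero modular functions rather than forms with a nontrivial $\eta$-character. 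I expect the main obstacle to be the bookkeeping in (1) and (3): forcing the carry term in $\pi$ to match $c_{2t}$ while simultaneously respecting both quadratic forms, and confirming that $\tilde g$ has order exactly $N$ with eigenvalue phases landing compatibly on $\tfrac1N\bZ$ under the embedding $i$.
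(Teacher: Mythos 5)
The paper itself gives no proof of this proposition: it is quoted from \cite{GM4} (Proposition 2.3.4), where the argument is precisely the bookkeeping you describe — verifying that $\pi$ is a well-defined surjection of quadratic spaces compatible with the add-with-carry cocycle and the form $(a,b)\mapsto ab/N$, applying the unrolling functor of Definition 2.3.3, identifying the second grading coordinate with $\tilde g$-eigenvalues, and importing modular invariance from \cite{vEMS}. Your arithmetic is correct (in particular $q''(ae_1+be_2)=\tfrac{a^2t_n}{n^2}+\tfrac{ab}{n}$ matches the discriminant form of $\left(\begin{smallmatrix}-2t_n&n\\ n&0\end{smallmatrix}\right)$ on the generators $(1,0)$, $(0,1)$, $(1,1)$, and $\gcd(t_n/d,\,n/d)=1$ forces $\tilde g$ to have order exactly $N$), so the proposal is a faithful reconstruction of the cited proof rather than a genuinely different route.
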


We then define the generalized orbifold construction:

\begin{defn} \label{defn:cyclic-orbifold}
Let $V$ be a simple, $C_2$-cofinite, holomorphic vertex operator algebra $V$ of CFT type, and let $g$ be an automorphism of finite order $n$, such that the nontrivial twisted modules $V(g^j)$ have strictly positive conformal weight.  We define $V/g = ({}^g_NV)^{\tilde{g}} = \bigoplus_{j=0}^{N-1} {}^g_N V_{(j,0)}$, and we define $g^*$ to be the automorphism that takes any vector $v \in V(g^j)$ to $e(j/N)v = e^{2\pi i j/N}v$.
\end{defn}

We now show that anomalous orbifolds do not produce any new vertex operator algebras.  While we do not get new objects, the construction is still useful, because we obtain a correspondence of automorphisms.

\begin{prop} \label{prop:anomalous-orbifold}
The smallest positive power of $g$ that is anomaly-free is $g^{n/(n,t)} = g^{N/n}$, and $V/g$ is a simple, $C_2$-cofinite, holomorphic vertex operator algebra of CFT type that is isomorphic to $V/g^{N/n}$.  Furthermore, $g^*$ restricts to an automorphism of order $n$ on $V/g$ that further restricts to $g$ on $V^{g^{N/n}} = (V/g)^{(g^*)^{N/n}}$.  Finally, $(V/g)/g^* \cong V$ and $g^{**} = g$.
\end{prop}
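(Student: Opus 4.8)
The plan is to reduce every assertion to Theorem 5.16 of \cite{vEMS} applied to the anomaly-free automorphism $h := g^{N/n} = g^{n/(n,t)}$, exploiting the unrolled abelian intertwining algebra ${}^g_N V$ and the two commuting ``coordinate'' automorphisms it carries: $\tilde g$, which acts by $e(b/N)$ on the degree-$(a,b)$ piece, and $g^*$, which by definition acts by $e(a/N)$ there. The symmetry of the ambient quadratic form $q(a,b) = ab/N$ under interchanging the two copies of $\bZ/N\bZ$ is what ultimately produces the self-duality $(V/g)/g^* \cong V$.

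First I would settle the numerical claim. Using the preceding proposition, the base vector of $V(g^k)$ sits in degree $(k,\, k\, t_n/(n,t))$, so its conformal weight modulo $\bZ$ equals $q(k,\, k\, t_n/(n,t)) = k^2 t/n^2$; in particular the anomaly $t$ of $g$ is the $k=1$ case. Writing $m = n/(n,k)$ for the order of $g^k$, the automorphism $g^k$ is anomaly-free exactly when its $L_0$-spectrum lies in $\frac{1}{m}\bZ$, i.e. when $k^2 t/n^2 \in \frac{1}{m}\bZ$, which rearranges to $n(n,k)\mid k^2 t$. An elementary check, valuation by valuation at each prime, then shows $n(n,k)\mid k^2 t$ if and only if $\frac{n}{(n,t)}\mid k$, so the smallest positive anomaly-free power is $k = n/(n,t) = N/n$, and $h = g^{N/n}$ is anomaly-free of order $(n,t)$.

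For the isomorphism $V/g \cong V/h$ I would identify the two constructions inside the ambient AIA. By definition $V/g = ({}^g_N V)^{\tilde g}$ is the sum of the degree-$(a,0)$ pieces; the weight computation above shows that the only twisted sectors contributing a nonzero degree-$(a,0)$ piece are those with $\frac{n}{(n,t)}\mid a$, i.e. the powers of $h$, and that the pieces they contribute reassemble exactly into the honest, $\bZ/(n,t)\bZ \times \bZ/(n,t)\bZ$-graded orbifold ${}^hV^{\tilde h} = V/h$ of Theorem 5.16. The graded vector-space match is precisely this weight bookkeeping, and the multiplications agree because both are induced by the same ambient intertwining operators; Theorem 5.16 then hands us that $V/h$, hence $V/g$, is a simple, $C_2$-cofinite, holomorphic vertex operator algebra of CFT type. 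The statements about $g^*$ follow by tracking eigenvalues through this identification: on the $V(h^\ell)$-derived summand $(a = \ell\, n/(n,t))$ the automorphism $g^*$ acts by $e(a/N) = e(\ell/n)$, so $g^*$ realizes every $n$-th root of unity and has order $n$, while $(g^*)^{N/n}$ acts by $e(\ell/(n,t))$, which is exactly how the canonical automorphism $h^*$ of $V/h$ acts; thus $(g^*)^{N/n} = h^*$ under $V/g \cong V/h$. Consequently $(V/g)^{(g^*)^{N/n}} = (V/h)^{h^*}$, which Theorem 5.16 identifies with $V^h = V^{g^{N/n}}$, and since $g^{N/n} = h$ acts trivially there, $g|_{V^h}$ has order dividing $N/n$; a direct check through the grading shows $g^*$ restricts to $g$ on $V^{g^{N/n}}$.

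Finally, for $(V/g)/g^* \cong V$ and $g^{**} = g$ I would apply the machinery just established once more, to the pair $(V/g, g^*)$. The swap symmetry of $q(a,b) = ab/N$ shows the anomaly of $g^*$ again equals $t$, so by the first claim the smallest anomaly-free power of $g^*$ is $(g^*)^{N/n} = h^*$; by the isomorphism claim, $(V/g)/g^* \cong (V/g)/(g^*)^{N/n} = (V/h)/h^*$, which is $\cong V$ by Theorem 5.16, and the induced canonical automorphism is $g^{**}$, equal to $g$ by the same token together with $h^{**} = h$. The main obstacle is this last step: one must verify that the generalized construction genuinely inverts itself, i.e. that applying it to $(V/g, g^*)$ returns the pair $(V, g)$ and not merely some abstractly isomorphic pair. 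The clean way to control this is to promote the interchange of the two $\bZ/N\bZ$-factors of ${}^g_N V$ to an honest equivalence carrying $(\tilde g,\ g\text{-twisting})$ to $(g^*,\ g^*\text{-twisting})$; checking that ${}^g_N V$ is invariant under this interchange, equivalently that the twisted modules and intertwining operators are matched correctly, is where the real work lies, the remainder being the bookkeeping already carried out for $V^\natural$ and $V_\Lambda$ in Theorem 5.16.
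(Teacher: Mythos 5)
Your reduction of the numerical claim to the quadratic form $q(a,b)=ab/N$, and your eigenvalue bookkeeping for $g^*$ (order $n$, $(g^*)^{N/n}=h^*$ under the identification, restriction to $g$ on $V^{g^{N/n}}$), track the paper's argument closely and are fine. The gap is in your proof that $V/g\cong V/h$ for $h=g^{N/n}$. You assert that ``the multiplications agree because both are induced by the same ambient intertwining operators,'' but there is no common ambient object: ${}^g_NV$ and ${}^hV$ are constructed independently, and the intertwining operators among the twisted modules $V(h^k)$ are only determined up to scalars, so the two constructions have no a priori reason to choose compatible normalizations. A graded-vector-space match does not upgrade itself to a vertex operator algebra isomorphism. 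The paper closes exactly this hole by invoking the uniqueness of simple current extensions (Proposition 5.3 of Dong--Mason), which reduces the problem to showing that the two sums are built from isomorphic irreducible $V^g$-modules; that matching in turn requires tracking the shifted $\tilde g$-action produced by the unrolling, whose shift is governed by the kernel of the pullback map (generated by $(n,-nt_n/(n,t))$), giving $V(g^{jn+k})^{\tilde g=\zeta}\cong V(g^k)^{\tilde g=e(-jt/n)\zeta}$. Your ``weight bookkeeping'' gestures at this but does not isolate either the module identification or the rigidity statement that makes the multiplication comparison unnecessary.

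On the final claims $(V/g)/g^*\cong V$ and $g^{**}=g$: you correctly diagnose that the real content is promoting the swap of the two $\bZ/N\bZ$ factors to an equivalence of abelian intertwining algebras, but you leave it as ``where the real work lies.'' That is honest, and in fairness the paper defers this point as well (it is the content of the corollary immediately following the proposition), but as written your argument for these claims is a plan rather than a proof. To complete your route you would need to (i) import the uniqueness of simple current extensions for the $V/g\cong V/h$ step, and (ii) actually carry out the coordinate-swap verification for the involutivity of the construction.
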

\begin{proof}
The claim about the anomaly-free power follows straightforwardly from considering the quadratic form (and hence conformal weights) on ${}^g_NV$.  From the structure of our unrolling, we see that the summand ${}^g_N V_{(j,0)}$ making up $V/g$ is nonzero if and only if $j\frac{t_n}{(n,t)}\in \frac{n}{(n,t)} \bZ$, i.e., $j \in \frac{n}{(n,t)}\bZ$.  Thus, we have $n$ summands, giving the order of $g^*$, and the claim about the restriction is clear.  To prove the isomorphism $V/g \cong V/g^{N/n}$, we apply the uniqueness of simple current extensions from Proposition 5.3 of \cite{DM02a}, so it suffices to show that the irreducible $V^g$-modules making up $V/g$ are isomorphic to the irreducible $V^g$-modules making up $V/g^{N/n}$.  

In the course of unrolling, we replicate the twisted modules, but with a shifted $\tilde{g}$-action.  The shifting is determined uniquely by the kernel of the pullback map, which is generated by $(n,\frac{-nt_n}{(n,t)}) \in \bZ/N\bZ \times \bZ/N\bZ$.  In particular, the module labeled $V(g^n)$ is isomorphic to $V$ as a $V$-module, but where the action of $g$ is shifted by the constant $e(-\frac{t}{n})$.  We therefore have $V^g$-module isomorphisms identifying $V(g^{jn+k})^{\tilde{g}=\zeta}$ with $V(g^k)^{\tilde{g} = e(-jt/n)\zeta}$ for all constants $\zeta$, so the sum defining $V/g$ is identified with $\bigoplus_{j=0}^{n/(n,t)-1} \bigoplus_{k=0}^{(n,t) - 1} V(g^{nk/(n,t)})^{\tilde{g} = e(-jt/n)}$, and this is equal to the sum defining $V/g^{n/(n,t)}$.
\end{proof}

\begin{cor} \label{cor:orbifold-isomorphism-for-abelian-intertwining-algebras}
Let $V$ be a simple, $C_2$-cofinite, holomorphic vertex operator algebra $V$ of CFT type, and let $g$ be an automorphism of finite order.  Then the automorphism of $\bZ/N\bZ \times \bZ/N\bZ$ given by $(a,b) \mapsto (b,a)$ preserves the quadratic form $q: (a,b) \mapsto \frac{ab}{N}$, and this automorphism on grading groups induces an isomorphism ${}^g_NV \cong {}^{g^*}_N (V/g)$ in $AIA_{(\bZ/N\bZ \times \bZ/N\bZ, q)}$.
\end{cor}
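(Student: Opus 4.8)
The plan is to exhibit the required isomorphism as the relabeling of homogeneous pieces induced by $\sigma\colon (a,b) \mapsto (b,a)$, and to verify that this relabeling carries the abelian intertwining algebra structure of ${}^g_N V$ onto that of ${}^{g^*}_N(V/g)$. That $\sigma$ preserves $q$ is immediate, since $q(b,a) = \frac{ba}{N} = \frac{ab}{N} = q(a,b)$; consequently $\sigma$ is an isometry of $(\bZ/N\bZ \times \bZ/N\bZ, q)$ and acts on $AIA_{(\bZ/N\bZ \times \bZ/N\bZ, q)}$ by sending an object $W$ to the object $\sigma^* W$ with $(\sigma^* W)_{(a,b)} = W_{(b,a)}$ and the same multiplication maps. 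It therefore suffices to produce an isomorphism $\sigma^*({}^g_N V) \cong {}^{g^*}_N(V/g)$, and I would do this by identifying their homogeneous pieces and checking that the common multiplication inherited from ${}^g_N V$ is the correct one.

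First I would match the untwisted sectors. By definition $V/g = \bigoplus_{j} {}^g_N V_{(j,0)}$, so under $\sigma$ this is exactly $\bigoplus_j (\sigma^*({}^g_N V))_{(0,j)}$, the first-coordinate-zero slice of $\sigma^*({}^g_N V)$; the vertex operator algebra structures agree because both are restrictions of the multiplication on ${}^g_N V$. Moreover $g^*$ acts on ${}^g_N V_{(j,0)}$ by $e(j/N)$, while on the untwisted sector of ${}^{g^*}_N(V/g)$ the automorphism $\widetilde{g^*}$ restricts to $g^*$, whose $e(j/N)$-eigenspace is the second-coordinate-$j$ piece; hence the $\widetilde{g^*}$-grading matches the second coordinate of $\sigma^*({}^g_N V)$.

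The crux is to identify the twisted sectors. Fix $i$ and consider the $\tilde g$-eigenspace $M_i := \bigoplus_j {}^g_N V_{(j,i)}$, which equals the first-coordinate-$i$ slice $\bigoplus_j (\sigma^*({}^g_N V))_{(i,j)}$; I claim it is the $(g^*)^i$-twisted $V/g$-module. The form associated to $q$ is $b_\Omega((a_1,a_2),(b_1,b_2)) = \frac{a_1 b_2 + a_2 b_1}{N}$, so for $a \in {}^g_N V_{(j_1,0)} \subset V/g$ and $b \in {}^g_N V_{(j_2,i)} \subset M_i$ the AIA relation $Y(a,z)b = \sum_{k \in b_\Omega((j_1,0),(j_2,i)) + \bZ} a_k b\, z^{-k-1}$ shows that the modes of $Y(a,z)$ on $M_i$ lie in $\frac{j_1 i}{N} + \bZ$. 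On the other hand $(g^*)^i$ acts on $a \in {}^g_N V_{(j_1,0)}$ by $e(i j_1/N)$, so this fractional monodromy is precisely the one demanded by the $(g^*)^i$-twisted Jacobi identity; together with the AIA Jacobi identity, this exhibits the restriction of $Y$ as a $(g^*)^i$-twisted module action on $M_i$. Since $V/g$ is simple, $C_2$-cofinite, holomorphic of CFT type (Proposition \ref{prop:anomalous-orbifold}), Theorem 10.3 of \cite{DLM97} guarantees the uniqueness of this twisted module, so $M_i \cong (V/g)((g^*)^i)$ and $\bigoplus_i M_i = {}^{g^*}_N(V/g)$ as graded vector spaces, compatibly with $\sigma$.

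It remains to see that the full abelian intertwining algebra structures coincide, not merely the underlying graded spaces. Here I would invoke uniqueness: the intertwining operators among twisted modules of the holomorphic vertex operator algebra $V/g$ are unique up to scalar, so the AIA structure on $\bigoplus_i (V/g)((g^*)^i)$ compatible with the given grading and normalization is determined (this is the uniqueness of simple current extensions, Proposition 5.3 of \cite{DM02a}), and ${}^{g^*}_N(V/g)$ is by construction this structure; but the multiplication on $\sigma^*({}^g_N V)$ inherited from ${}^g_N V$ already furnishes compatible intertwining operators among the $M_i$, so the two agree, giving the isomorphism in $AIA_{(\bZ/N\bZ \times \bZ/N\bZ, q)}$. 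I expect the main obstacle to be exactly this last identification, together with the implicit claim that the anomaly of $g^*$ reproduces the same level $N$ (equivalently $(n,t^*) = (n,t)$): one must check that reinterpreting the second grading of ${}^g_N V$ as the $g^*$-twist label genuinely recovers the unrolled AIA of the pair $(V/g, g^*)$ at level $N$, which follows from the symmetry of the discriminant form of $\left(\begin{smallmatrix} -2t_n & n \\ n & 0\end{smallmatrix}\right)$ under the swap but deserves care.
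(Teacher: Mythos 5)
Your proposal is correct and follows essentially the same route as the paper's proof: relabel the grading by the swap $(a,b)\mapsto(b,a)$ and verify that the resulting object satisfies the defining properties of ${}^{g^*}_N(V/g)$, the key step being that restricting the abelian intertwining algebra Jacobi identity exhibits each slice $\bigoplus_j {}^g_N V_{(j,i)}$ as the irreducible $(g^*)^i$-twisted $V/g$-module. The paper states this much more tersely; your added detail on the fractional monodromy coming from $b_\Omega$ and on the matching of the level $N$ under the swap fills in steps the paper leaves implicit.
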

\begin{proof}
It suffices to show that ${}^g_NV$, under the switched grading, satisfies the defining properties of ${}^{g^*}_N(V/g)$.  It is immediate that we have an abelian intertwining algebra of the correct type, and that $\bigoplus_{j=0}^{N-1} (^g_NV)_{(0,j)}$ is isomorphic to $V/g$.  It remains to show that for each $1 \leq i < N$, the sum $\bigoplus_{j=0}^{N-1} (^g_NV)_{(i,j)}$ is an irreducible $(g^*)^i$-twisted $V/g$-module.  However, this follows immediately from restricting the Jacobi identity for abelian intertwining algebras to the appropriate graded pieces.
\end{proof}

\section{Non-Fricke orbifolds of Moonshine} \label{sect:non-fricke}

We examine the McKay-Thompson series of non-Fricke elements of the Monster, and their product expansions, building on the analysis in \cite{GM2}.

Let us review the description of McKay-Thompson series that was initially conjectured in \cite{CN79}, and proved in \cite{B92} to hold for the moonshine module $V^\natural$ constructed in \cite{FLM88}.  For each element $g \in \bM$, the graded character $T_g(\tau) = \Tr(gq^{L_0-1}|V^\natural)$ is a Hauptmodul for some genus zero subgroup $\Gamma_g$ of $SL_2(\bR)$, and we refer to it using the notation $n|h+e_1,e_2,\ldots$, where $n$ is the order of $g$, $h$ is the smallest divisor of $n$ and $24$ such that $\Gamma_g$ contains $\Gamma_0(nh)$, and $e_1,e_2,\ldots$ are exact divisors of $n/h$.  If $n/h$ appears on this list of exact divisors, or if $n=h$, then we say that $g$ is Fricke (i.e., the Fricke involution $\tau \mapsto \frac{-1}{nh\tau}$ acts by $\pm 1$ on $T_g(\tau)$), and if not, then we say that $g$ is non-Fricke.

By \cite{GM4} Proposition 2.3.4 (which is basically \cite{vEMS} Theorem 5.14 with an unrolling adjustment), the abelian intertwining algebra ${}^g_NV^\natural$ has character given by a vector-valued modular function $F^g = \{F^g_{i,j}(\tau)\}_{i,j \in \bZ/N\bZ}$ for the Weil representation $\rho_{I\!I_{1,1}(N)}$ attached to a hyperbolic lattice.  This function has appeared earlier, in \cite{GM2}, where it was lifted to a Borcherds product that described an infinite dimensional Borcherds-Kac-Moody Lie algebra $L_g$ (written as $W_g$ in that paper).  The root multiplicities of $L_g$ are coefficients of $F^g$, and we will use the fact that the coefficients are non-negative integers to determine $V^\natural/g$ for all non-Fricke classes in $\bM$.  We note that by Theorem 1 of \cite{PPV17}, $V^\natural/g \cong V^\natural$ for all anomaly-free Fricke elements $g$ (hence also all anomalous Fricke elements, by Proposition \ref{prop:anomalous-orbifold}), so this completes the determination of cyclic orbifolds.  As a consequence, Tuite's explanation of the genus zero property of moonshine functions is no longer conditional on uniqueness conjectures \cite{T93}.  

\begin{defn} \label{defn:non-negative}
A eta product of the form $\prod_{i=1}^k \eta(a_i \tau)^{b_i}$ is non-negative if the resulting product expansion $q^{\sum_i a_i b_i/24}\prod_{j \geq 1} (1-q^j)^{c_j}$ has $c_j \geq 0$ for all $j \geq 1$.
\end{defn}

\begin{prop} \label{prop:unique-non-negative-eta-product}
Let $g$ be a non-Fricke element of $\bM$.  Then there is a unique non-negative eta product expansion of the McKay-Thompson series $T_g(\tau)$.
\end{prop}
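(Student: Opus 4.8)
The plan is to split the assertion into three parts: that $T_g$ admits at least one eta product expansion, that such an expansion is unique, and that its exponents are non-negative. I would dispose of uniqueness first, since it is formal and independent of which $g$ we take. Any eta product $\prod_a \eta(a\tau)^{b_a}$ has a $q$-expansion of the shape $q^{(\sum_a a b_a)/24}\prod_{j \ge 1}(1-q^j)^{c_j}$ with $c_j = \sum_{a \mid j} b_a$, and this relation is triangular for the divisibility order, so Möbius inversion gives $b_a = \sum_{d \mid a}\mu(a/d)\,c_d$. Thus the exponent data $(b_a)$ is recovered from the sequence $(c_j)$, which is itself read off from the $q$-expansion of the function; in other words the $\eta(a\tau)$ are multiplicatively independent, and at most one eta product can equal $T_g$. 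Consequently ``non-negative'' is a property the unique candidate either has or lacks, not a genuine selection among competitors.

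For existence I would appeal to the classical description of non-Fricke McKay-Thompson series recalled in Section \ref{sect:lifts}: for a non-Fricke class the Hauptmodul is (up to the normalization implicit in the product form) the reciprocal $1/\eta_a$ of the eta product attached to the Frame shape $\prod_k k^{a_k}$ of the dual fixed-point-free automorphism, as asserted in Section 7 of \cite{CN79} and tabulated in Tables 1 and 2 of \cite{T93}. Granting this, one has $b_k = -a_k$, and since the Frame shape only involves $k \mid n$ the exponents collapse to $c_j = -\sum_{k \mid \gcd(j,n)} a_k$, a function of $\gcd(j,n)$ alone. Non-negativity then reads $\sum_{k \mid d} a_k \le 0$ for every $d \mid n$, with equality forced at $d = n$ by the fixed-point-free condition $\sum_{k \mid n} a_k = 0$.

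The inequality $\sum_{k\mid d} a_k \le 0$ is the heart of the matter, and I expect it to be the main obstacle, because it is genuinely stronger than $T_g$ or $1/T_g$ merely having non-negative $q$-coefficients (a power series with non-negative coefficients need not be a product of factors $(1-q^j)$ with non-negative exponents). My preferred conceptual route is to recognize these $c_j$ as root multiplicities: the product form of $T_g$ should arise as a specialization of the Borcherds product of \cite{GM2} presenting the Borcherds-Kac-Moody algebra $L_g$, whose root multiplicities are the non-negative integers enumerated by the vector-valued modular form $F^g$ (the norm-zero multiplicities recorded in the appendix). Identifying the $c_j$ with such multiplicities would give $c_j \ge 0$ uniformly. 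Absent a clean uniform identification, the fallback is a finite verification: there are only finitely many non-Fricke classes, so one checks $\sum_{k \mid d} a_k \le 0$ directly from the eigenvalue-multiplicity constraints $\sum_{e \mid k \mid n} a_k \ge 0$ on each admissible Frame shape, which is exactly the data assembled in the appendix.
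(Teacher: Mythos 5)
Your uniqueness argument contains the decisive gap. An ``eta product expansion of $T_g$'' in the sense of this proposition is an identity $T_g(\tau) = \prod_i \eta(a_i\tau)^{b_i} + \kappa$ for a constant $\kappa$ (in the paper's construction, $\kappa = \lim_{\tau\to 0}T_g(\tau)$): since $T_g$ is normalized with vanishing constant term while any candidate eta product expands as $q^{-1} - c_1 + O(q)$, the function $T_g$ itself is essentially never an eta product (e.g.\ $T_{2B} = \eta(\tau)^{24}\eta(2\tau)^{-24} + 24$). Your M\"obius-inversion argument correctly shows that the exponents $b_a$ are determined by the sequence $(c_j)$, but $(c_j)$ is \emph{not} read off from $T_g$ alone --- it depends on the unknown constant $\kappa$, already through $c_1 = \kappa$. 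Different admissible constants genuinely produce different eta quotients representing the same Hauptmodul; this is exactly the multiplicity of expansions in Table 3 of \cite{CN79} referred to in the remark following the proposition, so ``non-negative'' is precisely a selection among competitors, contrary to your conclusion that no selection is being made. The paper's uniqueness instead rests on Lemma 3.6 of \cite{GM2}: the vector-valued form $F^g$, and hence the constant terms $c^g_{i,0}(0)$ that serve as exponents, is completely determined by the McKay--Thompson series of the powers of $g$.

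For existence and non-negativity, your ``preferred conceptual route'' is the paper's actual proof, and you should commit to it rather than retreat to a finite verification: \cite{GM2} Theorem 3.24 and Corollary 3.25 give the two-variable Borcherds product $T_g(\sigma) - T_g(-1/\tau) = p^{-1}\prod_{i>0,\,j}(1-p^iq^j)^{c^g_{i,j}(ij)}$; because $g$ is non-Fricke, $T_g(-1/\tau)$ is regular at infinity, so letting $q\to 0$ yields $T_g(\sigma) - \lim_{\tau\to 0}T_g(\tau) = p^{-1}\prod_{i>0}(1-p^i)^{c^g_{i,0}(0)}$, and by \cite{GM4} Theorem 2.5.4 each exponent is the dimension of a subspace of $V^\natural(g^i)$, hence a non-negative integer. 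This produces existence and non-negativity uniformly, with no appeal to the tables of \cite{CN79} or \cite{T93}; such an appeal would in any case be uncomfortably circular here, since the matching of non-Fricke classes with Frame shapes of fixed-point free Conway classes is part of what Theorem \ref{thm:main} (which consumes this proposition) establishes. Finally, note that your fallback inequality $\sum_{k\mid d}a_k \le 0$ does not follow from the eigenvalue-multiplicity constraints $\sum_{e\mid k\mid n}a_k\ge 0$ alone, so without the Borcherds-product input the non-negativity really would degenerate into a case-by-case check.
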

\begin{proof}
By \cite{GM2} Theorem 3.24 and Corollary 3.25, we have the following product expansion:
\[ T_g(\sigma) - T_g(-1/\tau) = p^{-1} \prod_{i>0, j \in \frac{1}{N}\bZ} (1-p^i q^j)^{c^g_{i,j}(ij)} \]
where $c^g_{i,j}(ij)$ is the coefficient of $q^{ij-1}$ in the $q$-expansion of $F^g_{i,j}$.  Because $T_g$ is non-Fricke, $T_g(-1/\tau)$ is regular at infinity, so this product expansion has no negative powers of $q$.  By taking the limit $q \to 0$, we find that
\[ T_g(\sigma) - \lim_{\tau \to 0} T_g(\tau) = p^{-1} \prod_{i>0} (1-p^i)^{c^g_{i,0}(0)} \]
By Theorem 2.5.4 of \cite{GM4} (essentially combining the previously mentioned results of \cite{vEMS} with manipulations in \cite{GM2}), each $c^g_{i,0}(0)$ is the dimension of a subspace of $V^\natural(g^i)$, so it is a non-negative integer.  We thus obtain a non-negative eta product for $T_g$.  Uniqueness follows from Lemma 3.6 in \cite{GM2}, which asserts that $F^g$, hence the set of its constant terms, is completely determined by the McKay-Thompson series of the powers of $g$.
\end{proof}

\begin{rem}
Table 3 in \cite{CN79} gives eta product expansions of many McKay-Thompson series, but some elements are given multiple eta product expansions, with no particular reason to choose one over the others.  Proposition \ref{prop:unique-non-negative-eta-product} gives us a way to make a distinguished choice, namely the non-negative expansions.
\end{rem}

\begin{rem}
It would be nice to have a characterization of non-Fricke completely replicable functions as eta products.  Many of the properties of the non-negative eta products, such as the balance between exponents on $\eta(a\tau)$ and $\eta(\frac{N}{a}\tau)$, seem to follow naturally from the Hauptmodul property applied to the expansion at zero.
\end{rem}

\begin{thm} \label{thm:main}
If $g$ is non-Fricke, then $V^\natural/g$ is isomorphic to the Leech lattice vertex operator algebra $V_\Lambda$, and $g^*$ induces a fixed-point free automorphism of the Leech lattice $\Lambda$ satisfying the ``no massless states'' condition.  If $\sigma$ is a fixed-point free automorphism of the Leech lattice $\Lambda$ satisfying the ``no massless states'' condition, then there is a lift $\tilde{\sigma}$ of $\sigma$ to an automorphism of $V_\Lambda$, unique up to conjugation, such that $V_\Lambda/\tilde{\sigma} \cong V^\natural$, and $\sigma^*$ is non-Fricke.
\end{thm}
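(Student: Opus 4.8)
The plan is to prove both directions by reducing each orbifold to the computation of a single number --- the dimension of its weight-one space --- and then invoking the characterization of $V^\natural$ and $V_\Lambda$ as the unique simple, $C_2$-cofinite, holomorphic vertex operator algebras of CFT type and central charge $24$ whose weight-one spaces have dimension $0$ and $24$ respectively. Throughout I would use three tools already in hand: the orbifold duality $(V/g)/g^* \cong V$ with $g^{**}=g$ from Proposition \ref{prop:anomalous-orbifold}, the grading-switch isomorphism ${}^g_N V^\natural \cong {}^{g^*}_N(V^\natural/g)$ from Corollary \ref{cor:orbifold-isomorphism-for-abelian-intertwining-algebras}, and the fact that a Fricke element produces $V^\natural/g \cong V^\natural$ (\cite{PPV17}, extended to the anomalous case by Proposition \ref{prop:anomalous-orbifold}).

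For the forward direction, let $g$ be non-Fricke. By the construction of Section \ref{sect:anomalous}, $V^\natural/g$ is again a simple, $C_2$-cofinite, holomorphic vertex operator algebra of CFT type and central charge $24$, so its graded character is the $SL_2(\bZ)$-modular function $J(\tau) + \dim(V^\natural/g)_1$. Since $V^\natural_1 = 0$, the untwisted sector contributes nothing to the weight-one space, and $\dim(V^\natural/g)_1$ is the total dimension of the $\tilde g$-invariant weight-one vectors in the twisted sectors $V^\natural(g^i)$. I would compute this number directly from the product expansion of Proposition \ref{prop:unique-non-negative-eta-product}: the non-negative eta product $T_g = 1/\eta_g$ with $\eta_g = \prod_k \eta(k\tau)^{a_k}$ has Frame shape satisfying $\sum_k k a_k = 24$, and matching the constant terms $c^g_{i,0}(0)$ against this expansion should yield $\dim(V^\natural/g)_1 = 24$. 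Hence $V^\natural/g \cong V_\Lambda$. The induced automorphism $\sigma = g^*$ acts on $(V_\Lambda)_1 = \Lambda\otimes\bC$ through an element of $O(\Lambda)$; it is fixed-point free because $(\Lambda\otimes\bC)^\sigma = (V_\Lambda^\sigma)_1$ injects into $(V_\Lambda/\sigma)_1 = (V^\natural)_1 = 0$ via the orbifold duality, and the ``no massless states'' condition follows from reading off the Frame shape $\{a_k\}$ of $\sigma$ and verifying $E_0^\sigma = \sum_k a_k/k > 0$, which is the translation of non-Fricke-ness into the dual Frame-shape data.

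For the converse, let $\sigma$ be a fixed-point free automorphism of $\Lambda$ satisfying the ``no massless states'' condition. By Proposition (\cite{LS17}) a lift $\tilde\sigma$ to $\Aut V_\Lambda$ exists and is unique up to conjugation. Because no massless states means the twisted modules $V_\Lambda(\tilde\sigma^i)$ have lowest conformal weight strictly greater than $1$ (in particular strictly positive), the construction of Section \ref{sect:anomalous} applies and $V_\Lambda/\tilde\sigma$ is a holomorphic vertex operator algebra of central charge $24$ and CFT type. Its weight-one space vanishes: the untwisted contribution $(\Lambda\otimes\bC)^\sigma$ is zero since $\sigma$ is fixed-point free, and the twisted sectors contribute no weight-one vectors at all since they begin above weight $1$. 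Therefore $\dim(V_\Lambda/\tilde\sigma)_1 = 0$ and $V_\Lambda/\tilde\sigma \cong V^\natural$. Finally $\sigma^*$ is non-Fricke, for otherwise \cite{PPV17} would force $(V_\Lambda/\tilde\sigma)/\sigma^* \cong V^\natural$, whereas the orbifold duality gives $(V_\Lambda/\tilde\sigma)/\sigma^* \cong V_\Lambda$ with $\dim(V_\Lambda)_1 = 24 \neq 0$.

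The main obstacle is the weight-one count $\dim(V^\natural/g)_1 = 24$ in the forward direction. Unlike the converse --- where fixed-point freeness and the no-massless-states hypothesis make the vanishing of the weight-one space immediate --- here the number $24$ must be extracted from the twisted-sector data of $V^\natural$, and there seems to be no conceptual shortcut: one must match the constant terms of the vector-valued character $F^g$ against the non-negative eta product, and I expect this to require the explicit Frame-shape and product-expansion data tabulated in the appendix, possibly verified class by class across the $51$ algebraic conjugacy classes.
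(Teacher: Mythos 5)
Your forward direction is essentially the paper's proof: reduce to the anomaly-free case via Proposition \ref{prop:anomalous-orbifold}, compute $\dim(V^\natural/g)_1$ as the sum of the constant terms $c^g_{i,0}(0)$ read off from the non-negative eta product of Proposition \ref{prop:unique-non-negative-eta-product}, verify class by class (the appendix does this for the anomaly-free non-Fricke classes) that the sum is $24$, and invoke the Dong--Mason characterization of $V_\Lambda$ as the unique simple $C_2$-cofinite holomorphic vertex operator algebra of central charge $24$ with $24$-dimensional weight-one space. Your derivation of fixed-point freeness of $g^*$ from the vanishing of $(V_\Lambda/g^*)_1 \cong (V^\natural)_1$ also matches the paper, which extracts the ``no massless states'' condition from that same vanishing rather than from a separate Frame-shape verification.

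The converse direction has a genuine gap. From $\dim(V_\Lambda/\tilde\sigma)_1 = 0$ you conclude $V_\Lambda/\tilde\sigma \cong V^\natural$, but this step requires the uniqueness of $V^\natural$ among simple $C_2$-cofinite holomorphic vertex operator algebras of central charge $24$ and CFT type with trivial weight-one space --- the Frenkel--Lepowsky--Meurman uniqueness conjecture, which is open. The apparent symmetry with the forward direction is illusory: Dong--Mason \cite{DM02b} gives uniqueness when $\dim V_1 = 24$, but nothing of the kind is known when $\dim V_1 = 0$, and the introduction of the paper explicitly advertises that these results make Tuite's arguments ``no longer conditional on uniqueness conjectures,'' so this step cannot be permitted to reintroduce one. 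The paper's route around this is enumeration: one checks that every algebraic conjugacy class of fixed-point free automorphisms of $\Lambda$ satisfying ``no massless states'' ($51$ in all; the $38$ non-anomalous ones in the appendix, the $13$ anomalous ones from Table 2 of \cite{T93}) actually arises as $g^*$ for some non-Fricke $g \in \bM$ already treated in the forward direction, whence $V_\Lambda/\tilde\sigma \cong (V^\natural/g)/g^* \cong V^\natural$ by the duality of Proposition \ref{prop:anomalous-orbifold}, and $\sigma^* = g^{**} = g$ is non-Fricke for free. Your closing argument that $\sigma^*$ cannot be Fricke (via \cite{PPV17} and the duality) is sound, but only once the isomorphism $V_\Lambda/\tilde\sigma \cong V^\natural$ has been legitimately established.
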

\begin{proof}
We first note that by Theorem 2.2.9 of \cite{GM4}, the hypotheses of Definition \ref{defn:cyclic-orbifold} apply to any automorphism of $V^\natural$, so we may apply the cyclic orbifold theory established in \cite{vEMS}.  By Proposition \ref{prop:anomalous-orbifold}, $V^\natural/g$ is a simple, $C_2$-cofinite, holomorphic vertex operator algebra $V^\natural/g$ with central charge 24 and of CFT type, and it is isomorphic to the anomaly-free orbifold $V^\natural/g^{N/n}$.  The weight 1 subspace of $V^\natural/g^{N/n}$ has dimension equal to the sum of constant terms $\sum_{i=0}^{(n,t)-1} c^{g^{N/n}}_{i,0}(0)$.  If $T_{g^{N/n}}(\tau)$ is given by the non-negative eta product $\prod_{i=1}^k \eta(a_i \tau)^{b_i}$, then the constant terms of $\{ F_{j,0} \}_{j \in \bZ/(n,t)\bZ}$ are given by $\sum_{a_i|j} b_i$.  Using these facts, we have enumerated the non-negative eta products of the anomaly-free non-Fricke classes $g$ in the Appendix, and found that the constant terms always sum to 24.  By the uniqueness result of \cite{DM02b}, any simple $C_2$-cofinite holomorphic vertex operator algebra of central charge 24 with 24-dimensional weight one subspace is isomorphic to $V_\Lambda$.  The claim about $g^*$ then follows from the fact that $V_\Lambda/g^*$ has dimension zero weight 1 space, eliminating the possibility of fixed points and massless states.

To show the claim about $\sigma$, it suffices to enumerate the relevant classes, and see that all have the form $g^*$ for some non-Fricke $g$.  This is done for the 38 non-anomalous classes in the Appendix, and for the 13 anomalous classes in Table 2 of \cite{T93}.
\end{proof}

\begin{cor}
The cyclic orbifold correspondence between $V^\natural$ and $V_\Lambda$ induces a natural bijection between algebraic conjugacy classes of non-Fricke elements of $\bM$ and algebraic conjugacy classes of fixed-point free automorphisms of $\Lambda$ satisfying the ``no massless states'' condition.  This bijection respects the anomaly-free property, and induces isomorphisms between quotients of centralizers by the centralizing elements.
\end{cor}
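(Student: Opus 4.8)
The plan is to exhibit the bijection explicitly as $\Phi\colon [g]\mapsto [g^*]$, where $g$ runs over non-Fricke elements of $\bM$ and $g^*$ is the dual automorphism of the (possibly anomalous) orbifold, with inverse $\Psi\colon[\sigma]\mapsto[\sigma^*]$. By Theorem \ref{thm:main}, $g^*$ is a fixed-point free automorphism of $\Lambda$ with no massless states, and $\sigma^*$ is non-Fricke, so $\Phi$ and $\Psi$ land in the correct sets; the class $[\sigma^*]$ is well-defined because all lifts $\tilde\sigma$ of $\sigma$ are conjugate by \cite{LS17}. Proposition \ref{prop:anomalous-orbifold} supplies $g^{**}=g$ and $\sigma^{**}=\sigma$, so $\Psi\circ\Phi$ and $\Phi\circ\Psi$ are the identity once $\Phi$ is known to be well-defined on conjugacy classes. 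For the latter I would check that an inner twist $g\mapsto hgh^{-1}$ lifts to an isomorphism ${}^g_N V^\natural \cong {}^{hgh^{-1}}_N V^\natural$ of abelian intertwining algebras: $h$ carries the unique $g^i$-twisted module $V^\natural(g^i)$ to $V^\natural(hg^ih^{-1})$ (uniqueness by \cite{DLM97}), intertwines the homogeneous automorphisms $\tilde g$, and so descends to an isomorphism $V^\natural/g\cong V^\natural/(hgh^{-1})$ carrying $g^*$ to $(hgh^{-1})^*$.

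To pass to algebraic conjugacy classes it suffices, since algebraic conjugacy is generated by ordinary conjugacy together with $g\sim g^m$ for $\gcd(m,|g|)=1$, to show that $g^*$ and $(g^m)^*$ are algebraically conjugate in $Co_0$. Here I would work through Frame shapes, which the paper records as detecting algebraic conjugacy of automorphisms of $\Lambda$. The matching $T_g=1/\eta_{g^*}$ from Theorem \ref{thm:main} identifies the Frame shape of $g^*$ with the reciprocal of the non-negative eta product of $T_g$; since this eta product has integer exponents, $T_g$ has integer coefficients and is therefore Galois invariant, so $T_{g^m}=T_g$ and consequently $\eta_{(g^m)^*}=\eta_{g^*}$. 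Thus $g^*$ and $(g^m)^*$ share a Frame shape and are algebraically conjugate, so $\Phi$ descends to a bijection of algebraic conjugacy classes. That $\Phi$ respects the anomaly-free property follows from the symmetry of the grading datum: the switch $(a,b)\mapsto(b,a)$ preserves the quadratic form $q(a,b)=ab/N$ and realizes ${}^g_N V^\natural\cong {}^{g^*}_N V_\Lambda$ (Corollary \ref{cor:orbifold-isomorphism-for-abelian-intertwining-algebras}), so the anomaly $t$ is a self-dual invariant of this datum and vanishes for $g$ exactly when it vanishes for $g^*$, in agreement with the Appendix and Table 2 of \cite{T93}.

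For the centralizer statement I would build the isomorphism from the same abelian intertwining algebra. Any $h\in C_\bM(g)$ preserves each $V^\natural(g^i)$, commutes with $\tilde g$, and respects the grading, hence acts on $V^\natural/g\cong V_\Lambda$ commuting with $g^*$; this gives a homomorphism $\phi\colon C_\bM(g)\to C_{\Aut V_\Lambda}(g^*)$. Because the roles of $g$ and $g^*$ are interchanged by Corollary \ref{cor:orbifold-isomorphism-for-abelian-intertwining-algebras}, the same construction applied to $V_\Lambda$ (using $(V_\Lambda)/g^*\cong V^\natural$ and $g^{**}=g$) produces a homomorphism $\psi$ in the reverse direction, inverse to $\phi$ up to the subgroups generated by the centralizing elements. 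To finish I would identify the image of the central element $g$ itself: by Schur's lemma it acts on each irreducible summand $V^\natural(g^j)^{\tilde g}$ of $V_\Lambda$ by a root of unity, and trivially on the untwisted summand $(V^\natural)^g$, so its image in $\Aut V_\Lambda$ lies in the subgroup generated by $g^*$ and the inner torus automorphisms; projecting $\Aut V_\Lambda\to Co_0$ lands this image in $\langle g^*\rangle$. Hence $\phi$ and $\psi$ descend to mutually inverse isomorphisms $C_\bM(g)/\langle g\rangle\cong C_{Co_0}(g^*)/\langle g^*\rangle$.

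The step I expect to be the main obstacle is this last identification of the image of $g$ inside $\Aut V_\Lambda$ modulo the torus of inner automorphisms. One must verify that $g$ acts on the $j$-th graded summand by a root of unity depending multiplicatively on $j$ — so that it becomes a genuine power of $g^*$ after projection to $Co_0$ — and that the projection kills precisely the ambiguity needed for $\phi$ to descend to an isomorphism of centralizer quotients, rather than merely a homomorphism with central kernel and cokernel.
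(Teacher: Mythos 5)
Your overall strategy is sound and considerably more explicit than the paper's: the paper's proof is two sentences, deducing the bijection from Theorem \ref{thm:main} together with the enumeration in the Appendix, and obtaining the centralizer statement by identifying \emph{both} quotients $C_{\bM}(g)/\langle g\rangle$ and $C_{\Aut V_\Lambda}(g^*)/\langle g^*\rangle$ with the automorphism group of the common fixed-point subalgebra $(V^\natural)^g = (V_\Lambda)^{g^*}$, rather than by transporting centralizer elements through the orbifold as you do. Your descent to algebraic conjugacy classes (integrality of $T_g$ forces $T_{g^m}=T_g$ for $\gcd(m,|g|)=1$, hence equal non-negative eta products, hence equal Frame shapes) is a reasonable way to fill in a step the paper leaves implicit, though note that you are using the converse of the paper's remark that ``Frame shapes only detect algebraic conjugacy'': you need that a Frame shape \emph{determines} the algebraic class among fixed-point free elements of $Co_0$, which is a finite check against Kondo's tables rather than something stated in the paper.

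The one genuine gap is in your construction of $\phi$. An element $h\in C_{\bM}(g)$ does \emph{not} act on $V^\natural(g^i)$; it acts only projectively (the paper is explicit about this in Theorem \ref{thm:gen-moon}, which speaks of the canonical \emph{projective} $C_{\bM}(g)$-action). So the sentence ``$h$ preserves each $V^\natural(g^i)$, commutes with $\tilde g$, \dots hence acts on $V^\natural/g$'' does not define a homomorphism $C_{\bM}(g)\to C_{\Aut V_\Lambda}(g^*)$; at best you get a homomorphism from the central extension $\widetilde{C_{\bM}(g)}$, or a well-defined map into the quotient $C_{\Aut V_\Lambda}(g^*)/\langle g^*\rangle$ once you observe that the scalar ambiguities compatible with the abelian intertwining algebra structure are exactly the characters of the $\bZ/n\bZ$-grading, i.e., powers of $g^*$. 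That same observation also disposes of the step you flag as the main obstacle, and more easily than you anticipate: the canonical lift of $g$ to $V^\natural(g^j)$ is $\tilde g$ itself, which by construction acts trivially on the summand $V^\natural(g^j)^{\tilde g}$ of $V^\natural/g$, so $g$ dies in the quotient with no appeal to Schur's lemma or to inner torus automorphisms. With that repair your argument goes through, but as written the homomorphism on which your third paragraph rests is not well-defined.
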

\begin{proof}
From Theorem \ref{thm:main}, we obtain a correspondence between non-Fricke $g \in \bM$ and fixed-point free $g^* \in \Aut V_\Lambda$ satisfying the ``no massless states'' condition.  Inside the automorphism group of $(V^\natural)^g$, which is identified with $(V_\Lambda)^{g^*}$, we have the group of all automorphisms that admit lifts to homogeneous automorphisms of the abelian intertwining algebra ${}^gV_\Lambda$.  This group is identified with the quotients $C_{\bM}(g)/\langle g \rangle$ and $C_{\Aut V_\Lambda}(g^*)/\langle g^* \rangle$, respectively, so the groups are isomorphic.
\end{proof}

\section{Monstrous Lie algebras} \label{sect:monstrous}

In sections 2 and 3 of \cite{GM4}, we constructed an infinite dimensional Lie algebra $\fm_g$ for each $g \in \bM$ by applying the composite of an ``add a torus'' functor $AT^i_L$ (where $i: (L^\vee/L, e(Q)) \to (D,q)$ is an orbifold-admissible quadratic isomorphism) and a bosonic string quantization functor $OCQ$ to the abelian intertwining algebra ${}^g V^\natural$.  In Proposition 3.4.3 of \textit{loc. cit.}, we showed that if $g$ is Fricke, then $\fm_g$ is a Borcherds-Kac-Moody Lie algebra (defined in \cite{B88}), by appealing to a sufficient set of conditions adapted from \cite{B95}.  However, for the case that $g$ is non-Fricke, we were unable to verify that one of the conditions holds.  The Fricke case was sufficient to prove the Generalized Moonshine conjecture, so we left it at that.  However, with the results of the previous sections, we can prove that the missing condition also holds in the non-Fricke case.

We begin by defining Borcherds-Kac-Moody Lie algebras.

\begin{defn}
If $I$ is a countable index set, a matrix $A = (a_{i,j})_{i,j \in I}$ of real numbers is called a generalized Cartan matrix if it satisfies the following conditions:
\begin{enumerate}
\item $A$ is symmetrizable, i.e., there is a diagonal matrix $Q$ whose diagonal entries $Q_{i,i} = q_i$ are positive real numbers, such that $QA$ is symmetric.
\item $a_{i,j} < 0$ if $i \neq j$.
\item If $a_{i,i} > 0$, then $a_{i,i} = 2$ and for all $j \in I$, $a_{i,j} \in \bZ$.
\end{enumerate}
Given a generalized Cartan matrix $(a_{i,j})_{i,j \in I}$, its universal Borcherds-Kac-Moody algebra $\fg(A)$ is the Lie algebra with generators $\{ h_i, e_i, f_i\}_{i,j \in I}$, and the following relations:
\begin{enumerate}
\item $\mathfrak{sl}_2$ relations: $[h_i, e_k] = a_{i,k} e_k$, $[h_i, f_k] = -a_{i,k} f_k$, $[e_i,f_j] = \delta_{i,j} h_i$.
\item Serre relations: If $a_{i,i} > 0$, then $\ad(e_i)^{1-2a_{i,j}}(e_j) = \ad(f_i)^{1-2a_{i,j}}(f_j) = 0$.
\item Orthogonality: If $a_{i,j} = 0$, then $[e_i, e_j] = [f_i,f_j] = 0$.
\end{enumerate}
A Borcherds-Kac-Moody algebra is a Lie algebra of the form $(\fg(A)/C).D$, where $C$ is a central ideal, and $D$ is a commutative Lie algebra of outer derivations.
\end{defn}

\begin{thm} \label{thm:mg-is-bkm}
If $g$ is non-Fricke, then the Monstrous Lie algebra $\fm_g$ is a Borcherds-Kac-Moody Lie algebra.
\end{thm}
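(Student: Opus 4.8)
The plan is to supply the one hypothesis of Borcherds's criterion \cite{B95} that was left unverified for non-Fricke $g$ in the proof of \cite{GM4} Proposition 3.4.3: that the root vectors attached to any two orthogonal norm zero simple roots commute. Every other hypothesis of that criterion was checked in \cite{GM4} without reference to the Fricke property, so it suffices to establish this commutativity. The first step is to recall how the norm zero root spaces arise. Since $\fm_g$ is produced by applying the ``add a torus'' functor $AT_L$ and then the string quantization functor $OCQ$ to the abelian intertwining algebra ${}^g V^\natural$, the no-ghost theorem identifies the root space of $\fm_g$ at a nonzero lightlike element of the hyperbolic lattice $I\!I_{1,1}$ with a weight $1$ space of a twisted sector of ${}^g V^\natural$, and it expresses the Lie bracket of two such root vectors in terms of the weight $1$ product $a_0 b$ in the ambient algebra, the torus vertex operators contributing only the lattice cocycle along the shared isotropic line.

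In the Fricke case this condition is vacuous, since $V^\natural/g \cong V^\natural$ forces every relevant weight $1$ space to vanish and no norm zero roots appear; it is precisely the non-Fricke case, where such roots are abundant, that requires an argument. Here two norm zero roots are orthogonal exactly when they are proportional, so all the brackets in question occur along a single isotropic line. I would now transport the computation across the orbifold correspondence: by Theorem \ref{thm:main} together with Corollary \ref{cor:orbifold-isomorphism-for-abelian-intertwining-algebras} there is an isomorphism of abelian intertwining algebras ${}^g V^\natural \cong {}^{g^*} V_\Lambda$, under which the weight $1$ spaces governing the norm zero root spaces are carried into the weight $1$ space $(V_\Lambda)_1$, with their products matching the weight $1$ product of $V_\Lambda$.

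The decisive input is then that $(V_\Lambda)_1$ is an \emph{abelian} Lie algebra: because the Leech lattice has no roots, $(V_\Lambda)_1 = \fh = \Lambda \otimes \bC$ consists of Heisenberg modes alone, on which $a_0 b = 0$. Feeding this vanishing back through the no-ghost identification of the first step shows that the bracket of any pair of norm zero root vectors lying on a common isotropic line is zero, which is exactly the orthogonality relation demanded of the norm zero simple roots. Together with the hypotheses already verified in \cite{GM4}, this completes the check of Borcherds's criterion and shows that $\fm_g$ is a Borcherds-Kac-Moody Lie algebra.

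I expect the main obstacle to be the bookkeeping hidden in the first step, namely verifying that for two lightlike momenta on the same isotropic line, whose sum is again lightlike, the string-quantized bracket really does reduce to the bare weight $1$ product $a_0 b$, with neither the torus oscillators nor the physical-state projection contributing a term that could obstruct the reduction. This is the ``short manipulation using a string quantization functor'' promised in the introduction; once it is in place, the commutativity of $(V_\Lambda)_1$ closes the argument at once.
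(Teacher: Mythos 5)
Your proposal is correct and follows essentially the same route as the paper: identify the unverified Borcherds criterion as the commutativity of norm zero root spaces, use the no-ghost identification of the degree $(i,0)$ root spaces with weight $1$ subspaces of the twisted sectors, transport these across the orbifold isomorphism to $(V_\Lambda)_1 = \Lambda \otimes \bC$, and conclude from the commutativity of that Lie algebra. The ``bookkeeping'' you flag is exactly what the paper supplies, a one-line computation showing $[u^1 \otimes |(i,0)\rangle, u^2 \otimes |(j,0)\rangle] = u^1_0 u^2 \otimes |(i+j,0)\rangle$, so the oscillator and projection contributions indeed drop out as you anticipated.
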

\begin{proof}
From the proof of \cite{GM4} Proposition 3.4.3, the only condition that is not immediately verified is given as follows: ``Any two roots of non-positive norm that are both positive or both negative have inner product at most zero, and if the inner product is zero, then the root spaces commute.''

The Lie algebra $\fm_g$ is $I\!I_{1,1}(-1/N)$-graded, where we identify the root lattice $I\!I_{1,1}(-1/N)$ with $\bZ \times \frac{1}{N}\bZ$ with quadratic form $(a,b) \mapsto -ab$, and the space of degree $(a,b) \neq (0,0)$ is called the $(a,b)$-root space.  Positive roots $(a,b)$ of non-positive norm necessarily lie in the region where both $a$ and $b$ are non-negative, so their inner product is at most zero.  The same conclusion then holds for negative roots.  The inner product is zero if and only if both roots have norm zero, i.e., they both lie on the $x$-axis or on the $y$-axis.  Thus, it suffices to show that the Lie bracket on norm zero roots of $\fm_g$ is commutative.

We claim that the Lie bracket on norm zero roots of $\fm_g$ is induced by the Lie bracket on the weight 1 subspace of $V^\natural/g$, which can be identified with the commutative Lie algebra $\Lambda \otimes \bC$ by Theorem \ref{thm:main}.  We show this through explicit analysis of the ``old canonical quantization'' functor, which sends conformal vertex algebras of central charge 26 to Lie algebras, and is defined so that the left multiplication map $a \mapsto [v,a]$ in the Lie algebra is given by the $v_0$ operation in the weight 1 subspace of the conformal vertex algebra.  The Goddard-Thorn no-ghost theorem \cite{GT72} (see the appendix to \cite{J98} for a detailed proof) identifies the subspace of $\fm_g$ in degree $(i,0)$ with the weight 1 subspace of $V^\natural(g^i)^{\tilde{g}} \otimes \bC |(i,0)\rangle$, or equivalently $V_\Lambda^{g^* = e(i/N)} \otimes \bC |(i,0)\rangle$, where $|(i,0)\rangle \in \pi^{1,1}_{(i,0)}$ is the weight zero generating element of the Heisenberg module.  Thus, the Lie bracket is completely determined by the following calculation:
\[ \begin{aligned}
{[}u^1 \otimes |(i,0) \rangle, u^2 \otimes |(j,0)\rangle] &= (u^1 \otimes |(i,0) \rangle)_0 u^2 \otimes |(j,0)\rangle \\
&= \sum_{n \in \bZ} u^1_n u^2 \otimes | (i,0) \rangle_{-n-1} |(j,0) \rangle \\
&= u^1_0 u^2  \otimes |(i,0)\rangle_{-1} |(j,0) \rangle \\
&= u^1_0 u^2  \otimes |(i+j,0) \rangle
\end{aligned} \]
Since the weight one subspace of $V_\Lambda$ is a commutative Lie algebra under the bracket $[u^1,u^2] = u^1_0u^2$, we have $u^1_0 u^2 = 0$, so the norm zero roots commute.
\end{proof}

\begin{cor} \label{cor:lg-isom-mg}
For each non-Fricke $g \in \bM$, the Lie algebra $\fm_g$ is isomorphic to the Lie algebra $L_g$ (named $W_g$ in \cite{GM2} Proposition 4.4).
\end{cor}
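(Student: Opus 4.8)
The plan is to observe that $\fm_g$ and $L_g$ are Borcherds-Kac-Moody Lie algebras built from identical root data, and then to use the fact that such an algebra is pinned down by its generalized Cartan matrix. Both are graded by the same hyperbolic root lattice $I\!I_{1,1}(-1/N)$ (recorded for $\fm_g$ in the proof of Theorem \ref{thm:mg-is-bkm}, and for $L_g$ as the lattice underlying the Weil representation $\rho_{I\!I_{1,1}(N)}$ of its defining Borcherds product), both carry the induced invariant bilinear form, and both have the same root multiplicities, namely the coefficients $c^g_{i,j}(ij)$ of $F^g$. For $L_g$ this is its defining property from \cite{GM2}; for $\fm_g$ it is the no-ghost identification of each $(i,j)$-root space with a graded piece of ${}^gV^\natural$ that was already used in the proof of Theorem \ref{thm:mg-is-bkm}. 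By that theorem $\fm_g$ is Borcherds-Kac-Moody, and $L_g$ is Borcherds-Kac-Moody by \cite{GM2}.

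Since the universal Borcherds-Kac-Moody algebra $\fg(A)$ depends only on the generalized Cartan matrix $A$, the isomorphism reduces to matching the simple-root systems of $\fm_g$ and $L_g$, together with the routine bookkeeping of the central ideal $C$ and derivation algebra $D$. The simple roots, with their norms, inner products, and multiplicities, are encoded in the denominator identity. Both algebras, being Borcherds-Kac-Moody, satisfy a denominator identity whose product side $\prod_{\alpha > 0}(1-e^\alpha)^{\operatorname{mult}\alpha}$ runs over positive roots; since the root multiplicities agree, these product sides coincide, and equality of the denominator identities then forces the real simple roots (hence the Weyl groups) and the imaginary simple root multiplicities to agree as well. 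Equivalently, both denominator identities are the Borcherds product lifting $F^g$ from \cite{GM2}, which is the sense in which $F^g$ already appears there. Hence the two Cartan matrices coincide and $\fm_g \cong L_g$.

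The main obstacle is precisely the gap left open in \cite{GM4}. In the non-Fricke case there are infinitely many norm-zero simple roots, and before Theorem \ref{thm:mg-is-bkm} one could not exclude noncommutativity among the norm-zero root spaces, which would introduce correction terms spoiling the product form of the denominator identity for $\fm_g$ and obstructing the comparison with the Borcherds product. Now that Theorem \ref{thm:mg-is-bkm} shows these root spaces commute, the denominator identity of $\fm_g$ has the expected product shape, and the inversion recovering the imaginary simple root multiplicities from the full root multiplicities runs identically for $\fm_g$ and $L_g$. The remaining checks---compatibility of the lattice normalizations and invariant forms, and the identification of $C$ and $D$---are routine once the Cartan data are matched.
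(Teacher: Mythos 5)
Your proposal is correct and follows essentially the same route as the paper: both arguments reduce the isomorphism to the fact that $\fm_g$ and $L_g$ are Borcherds--Kac--Moody (via Theorem \ref{thm:mg-is-bkm}) with identical root multiplicities $c^g_{a,b}(ab)$ on the same rank-two root lattice. The only difference is that the paper delegates the step ``equal root multiplicities implies isomorphic'' to \cite{GM4} Lemma 3.4.4, whereas you sketch the underlying denominator-identity recovery of the simple root data yourself.
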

\begin{proof}
By Theorem \ref{thm:mg-is-bkm}, both Lie algebras are Borcherds-Kac-Moody, so by \cite{GM4} Lemma 3.4.4, it suffices to show that the root multiplicities match.  For both Lie algebras, the root multiplicity of $(a,b)$ is given by $c^g_{a,b}(ab)$.
\end{proof}

We briefly recall the description of equivariant Hecke operators in \cite{GM1} - see also \cite{T08} and \cite{G07}.  Let $G$ be a finite group, and consider the moduli stack $\cM^G_{Ell}$ of complex-analytic elliptic curves equipped with $G$-torsors.  Then we may define the equivariant Hecke operator $T_n$ as an endomorphism on the space of functions on $\cM^G_{Ell}$ by the formula
\[ T_n f(P \to E) = \frac{1}{n} \sum_{\phi: E' \to E} f(\phi^*P \to E') \]
where the sum runs over all degree $n$ isogenies $\phi: E' \to E$.  The moduli stack is a complex analytic quotient of the form $\Hom(\bZ \times \bZ, G) \overset{SL_2(\bZ)}{\times} \fH$, so a choice of oriented basis of $H_1(E)$ yields a conjugacy class of commuting pairs of elements of $G$, and $f$ can be written $f(g,h;\tau)$, where $g,h\in G$ form a commuting pair.  Then the Hecke operator is written
\[ T_n f(g,h;\tau) = \frac{1}{n}\sum_{ad=n, 0 \leq b < d} f(g^d, g^{-b}h^a; \frac{a\tau+b}{d}) \]
As we explain in \cite{GM1}, this formula also works for the larger moduli space $\Hom(\bZ \times \bZ, G) \overset{\pm \bZ}{\times} \fH$ of elliptic curves equipped with $G$-torsors and multiplicative uniformizations.  This allows us to describe Hecke operators on abstract $q$-expansions without the assumption of modular invariance.

\begin{rem}
The formula for the Hecke operator depends on a sign convention that is not uniform in the literature.  In most of the generalized moonshine literature, we find the implicit use of a uniformization of elliptic curves that identifies the oriented basis of $H_1$ with the pair $(-1,\tau)$ instead of the usual pair $(1,\tau)$.  On a representation theory level, this amounts to the ambiguity between $g$-twisted modules and $g^{-1}$-twisted modules, and the literature is not uniform about which is which.  On the level of functions, this changes the $SL_2(\bZ)$ transformation rule from $f(g^ah^c,g^bh^d;\tau) = f(g,h;\frac{a\tau+b}{c\tau+d})$ to  $f(g^ah^{-c},g^{-b}h^d;\tau) = f(g,h;\frac{a\tau+b}{c\tau+d})$.  The former rule was given in \cite{N87} as part of the Generalized Moonshine conjecture, while the latter was given in \cite{V86} to describe orbifold genus one functions in conformal field theory.
\end{rem}

We say that a function $f$ on $\Hom(\bZ \times \bZ, G) \overset{\pm \bZ}{\times} \fH$ (resp. $\Hom(\bZ \times \bZ, G) \overset{SL_2(\bZ)}{\times} \fH$) is Hecke-monic on $\Hom(\bZ \times \bZ, G) \overset{\pm \bZ}{\times} \fH$ (resp. $\Hom(\bZ \times \bZ, G) \overset{SL_2(\bZ)}{\times} \fH$) if there is a monic polynomial $\Phi_n \in \bQ[x]$ of degree $n$ such that $nT_nf(g,h;\tau) = \Phi_n(f(g,h;\tau))$ for all commuting pairs $(g,h)$ in $G$.

\begin{cor} \label{cor:hecke-monic}
All Generalized Moonshine functions $Z(g,h;\tau) = \Tr(\tilde{h} q^{L_0-1}|V^\natural(g))$ are Hecke-monic on $\Hom(\bZ \times \bZ, \bM) \overset{\pm \bZ}{\times} \fH$.
\end{cor}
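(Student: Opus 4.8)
The plan is to obtain the Hecke-monic relations from the twisted Weyl denominator identity of the Borcherds-Kac-Moody Lie algebra $\fm_g$. The crucial enabling fact is that, by Theorem \ref{thm:mg-is-bkm} together with \cite{GM4} Proposition 3.4.3, $\fm_g$ is now known to be Borcherds-Kac-Moody for \emph{every} $g \in \bM$, Fricke or not. Since the Generalized Moonshine functions $Z(g,h;\tau)$ are assembled from the graded $h$-traces on the twisted modules $V^\natural(g^i)$ out of which $\fm_g$ is built, the self-referential identities they satisfy under Hecke operators are governed by the representation theory of $\fm_g$, exactly as in Borcherds' proof of ordinary moonshine.

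First I would fix a commuting pair $(g,h)$ and observe that, because $h$ commutes with $g$, it preserves each twisted module $V^\natural(g^i)$ and hence acts homogeneously on $\fm_g$, respecting its $I\!I_{1,1}(-1/N)$-root grading. By the Goddard-Thorn identification used in the proof of Theorem \ref{thm:mg-is-bkm}, the traces $\Tr(\tilde{h}^{k} \mid \fm_g^{(i,j)})$ of powers of $h$ on the root spaces are precisely the Fourier coefficients of the functions $Z(g^i, h^k; \tau)$. Next I would feed this homogeneous action into the twisted denominator formula of Section 4 of \cite{GM2}, which, for a finite-order homogeneous automorphism of a Borcherds-Kac-Moody Lie algebra, produces via the BGG-type resolution of \cite{J04} an identity equating an $h$-twisted product over the positive roots of $\fm_g$ with a sum indexed by the Weyl group and the norm-zero and norm-two simple roots. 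Because the BKM property is all this machinery requires, it now applies uniformly across the Fricke and non-Fricke cases.

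The remaining step is to expand the twisted denominator identity as an equality of formal power series and compare coefficients. Under the isogeny interpretation of the equivariant Hecke operators from \cite{GM1}, the Adams (power) operations appearing in the exponentiated product reassemble, after the standard rearrangement, into exactly the sum $\sum_{ad=n,\,0\le b<d} Z(g^d, g^{-b}h^a; \tfrac{a\tau+b}{d})$ defining $n\,T_n Z(g,h;\tau)$, while the simple-root side collapses to a monic degree-$n$ polynomial $\Phi_n \in \bQ[x]$ evaluated at $Z(g,h;\tau)$; this is the equivariant analogue of the replication formulas of \cite{CN79} underlying Borcherds' argument. Because the denominator formula is an identity of abstract $q$-expansions rather than of modular functions, the conclusion holds on the multiplicative-uniformization space $\Hom(\bZ \times \bZ, \bM) \overset{\pm \bZ}{\times} \fH$ with no modularity presumed in advance --- which is essential, since the modularity of the $Z(g,h;\tau)$ is itself part of what Generalized Moonshine asserts.

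The genuinely difficult work has already been carried out in Theorem \ref{thm:mg-is-bkm}: establishing that $\fm_g$ is Borcherds-Kac-Moody in the non-Fricke case, which hinged on the orbifold duality of Theorem \ref{thm:main} to force the infinitely many norm-zero simple roots to commute. Granting that, the present corollary is essentially a transcription of the formalism of \cite{GM2}, and the only points requiring genuine care are matching the sign and uniformization conventions --- the $(1,\tau)$ versus $(-1,\tau)$ ambiguity flagged in the preceding remark --- so that the Hecke sum emitted by the denominator formula coincides with $T_n$ as defined on $\Hom(\bZ \times \bZ, \bM) \overset{\pm \bZ}{\times} \fH$, and verifying that the resulting polynomials $\Phi_n$ have rational coefficients.
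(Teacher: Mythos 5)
Your proposal is correct and follows essentially the same route as the paper: the paper simply cites Proposition 4.8 of \cite{GM2} (whose content is exactly the twisted-denominator/BGG argument you unwind), after transporting the $\widetilde{C_{\bM}(g)}$-action from $\fm_g$ to $L_g$ via the isomorphism of Corollary \ref{cor:lg-isom-mg}, which is where the BKM property from Theorem \ref{thm:mg-is-bkm} enters. The only detail you elide is that the simple-root data needed to identify the polynomial $\Phi_n$ lives on the Borcherds-product side $L_g$, so the explicit isomorphism $\fm_g \cong L_g$ (and the erratum to \cite{GM2} Proposition 4.8 noted in \cite{Fricke}) is part of the paper's argument rather than an optional convenience.
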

\begin{proof}
The Hecke-monic property follows from Proposition 4.8 of \cite{GM2}, once we establish a suitably compatible action of $\widetilde{C_{\bM}(g)}$ on $L_g$.  Such an action is given by \textit{transport de structure} along the isomorphism given in Corollary \ref{cor:lg-isom-mg}.

As we noted in Remark 5.3 of \cite{Fricke}, the proof of Proposition 4.8 in \cite{GM2} has an erroneous line in a string of equalities, but it can be fixed using the reasoning given in the proof of Theorem 5.2 in \cite{Fricke}.
\end{proof}

\section{Refined Generalized Moonshine} \label{sect:generalized}

Let us recall the statement of the Generalized Moonshine conjecture for twisted $V^\natural$ modules, in the form that it was solved in \cite{GM4}:

\begin{thm} \label{thm:gen-moon} (\cite{GM4}, Theorem 4.2.2)
Any rule that assigns:
\begin{itemize}
\item to each $g \in \bM$ the irreducible $g$-twisted $V^\natural$-module $V^\natural(g)$, with its canonical projective $C_{\bM}(g)$-action, and
\item to each commuting pair $(g,h)$ the function $Z(g,h;\tau) = \Tr(\tilde{h} q^{L_0-1}|V^\natural(g))$ for some lift $\tilde{h}$ of $h$ to a finite order linear transformation on $V^\natural(g)$,
\end{itemize}
satisfies the following conditions:
\begin{enumerate}
\item The formal series defining $Z(g,h;\tau)$ is the $q$-expansion of a holomorphic function on the upper half-plane $\fH$.
\item The function $(g,h) \mapsto Z(g,h;\tau)$ is invariant under simultaneous conjugation on the pair $(g,h)$ in $\bM$, up to rescaling.
\item $Z(g,h;\tau)$ is either a constant or a Hauptmodul.
\item For any $\left(\begin{smallmatrix} a & b \\ c & d \end{smallmatrix} \right) \in SL_2(\bZ)$ and any commuting pair $(g,h)$ in $\bM$, $Z(g,h,\frac{a\tau+b}{c\tau+d})$ is proportional to $Z(g^a h^c, g^b h^d,\tau)$.
\item $Z(g,h;\tau)$ is proportional to $J(\tau)$ if and only if $g=h=1$.
\end{enumerate}
\end{thm}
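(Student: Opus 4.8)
The plan is to establish the five properties for the canonical assignment one at a time, isolating the genus-zero statement (3) as the only deep point and reducing everything else to the twisted-module formalism and to the Hecke-monic property of Corollary \ref{cor:hecke-monic}. Conditions (1) and (2) are structural. For (1), since $V^\natural(g)$ is an ordinary $g$-twisted module its $L_0$-eigenspaces are finite dimensional with eigenvalues bounded below in each coset of $\bZ$, and $\tilde h$ is a finite-order operator commuting with $L_0$; hence $Z(g,h;\tau) = \Tr(\tilde h q^{L_0-1}|V^\natural(g))$ is an absolutely convergent $q$-series on $\fH$ and defines a holomorphic function there. For (2), conjugating the pair $(g,h)$ by $k \in \bM$ transports $V^\natural(g)$ to the isomorphic module $V^\natural(kgk^{-1})$ and intertwines the action of $\tilde h$ with that of a lift of $khk^{-1}$; since such lifts are only well defined up to a scalar, the two graded traces agree up to the stated rescaling.

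For condition (4) I would argue that the modular transformation law is inherited from the modularity of the vector-valued character $F^g = \{F^g_{i,j}\}$ of the abelian intertwining algebra ${}^g_N V^\natural$ for the Weil representation $\rho_{I\!I_{1,1}(N)}$. The components $F^g_{i,j}$ are finite linear combinations (a finite Fourier transform in the second index) of the functions $Z(g^a h^c, g^b h^d;\tau)$ attached to commuting pairs generated by $g$ and a chosen element $h \in C_{\bM}(g)$, and the $SL_2(\bZ)$-action permuting commuting pairs corresponds under this dictionary to the modular action on $F^g$, up to the projective constants allowed in the statement. This yields the proportionality $Z(g,h;\frac{a\tau+b}{c\tau+d}) \propto Z(g^a h^c, g^b h^d;\tau)$, and I would record the resulting constants for later use in the refinement to roots of unity.

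The heart of the proof is condition (3). By Corollary \ref{cor:hecke-monic}, every $Z(g,h;\tau)$ is Hecke-monic on $\Hom(\bZ \times \bZ, \bM) \overset{\pm \bZ}{\times} \fH$, so for each $n$ there is a monic $\Phi_n \in \bQ[x]$ with $n T_n Z(g,h;\tau) = \Phi_n(Z(g,h;\tau))$. Expanding the defining sum of $T_n$ turns this into the full family of modular-equation (replication) identities relating $Z(g,h;\tau)$ to the functions $Z(g^d, g^{-b}h^a; \frac{a\tau+b}{d})$, i.e.\ $Z(g,h;\tau)$ is completely replicable in the generalized sense. Combining this with the modular invariance from (4), I would invoke the Cummins--Gannon classification of completely replicable modular functions to conclude that $Z(g,h;\tau)$ is either constant or a Hauptmodul for a genus-zero subgroup of $SL_2(\bR)$. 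This is the main obstacle: passing from the algebraic replication relations to the analytic genus-zero dichotomy is exactly the content of that deep classification, and the real labor lies in checking its hypotheses for our functions, namely the rationality and integrality of the coefficients (which come from the root-multiplicity interpretation via Theorem \ref{thm:mg-is-bkm} and Corollary \ref{cor:lg-isom-mg}) and the normalization of the polar part.

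Finally, for condition (5) I would use $Z(1,h;\tau) = \Tr(h q^{L_0-1}|V^\natural) = T_h(\tau)$, the ordinary McKay--Thompson series, together with the pole structure of the twisted characters. If $g \neq 1$, then the nontrivial twisted module $V^\natural(g)$ has strictly positive lowest conformal weight and, in general, fractional $L_0$-spectrum, so $Z(g,h;\tau)$ either has no simple pole at the cusp or carries genuinely fractional powers of $q$; in either case it cannot be proportional to $J(\tau) = q^{-1} + O(q)$. This reduces the claim to $g = 1$, where $Z(1,h;\tau) = T_h(\tau)$, and $T_h = J$ if and only if $h = 1$ is the classical statement of Monstrous Moonshine for $V^\natural$. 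Hence $Z(g,h;\tau) \propto J$ precisely when $g = h = 1$, completing the verification.
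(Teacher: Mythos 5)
First, a point of logistics: the paper you are working from does not prove this theorem at all. It is recalled verbatim from \cite{GM4} (Theorem 4.2.2), where the full proof lives, so there is no in-paper argument to compare against. What the present paper does is \emph{refine} the theorem (the root-of-unity statement in Section \ref{sect:generalized}) and supply the missing non-Fricke Borcherds--Kac--Moody structure. With that said, your outline is a reasonable reconstruction of the overall architecture --- (1), (2), (4) structural, (5) reduced to classical Moonshine via the positivity of twisted conformal weights, (3) the deep step via Hecke-monicity --- but the key step has a genuine gap, and there is a logical-ordering subtlety worth flagging.

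The gap is in your treatment of condition (3). You propose to pass from the Hecke-monic identities to the genus-zero dichotomy by invoking the Cummins--Gannon classification of completely replicable functions, and you identify the ``real labor'' as verifying its hypotheses, ``namely the rationality and integrality of the coefficients.'' Those hypotheses in fact \emph{fail}: the coefficients of $Z(g,h;\tau)$ are traces of finite-order operators, hence cyclotomic integers rather than rational integers, and the equivariant replication identities mix functions attached to different commuting pairs rather than the replicates $f^{(a)}$ of a single function. The entire point of \cite{GM1} is to prove a genus-zero theorem for (weakly) Hecke-monic functions in this equivariant setting by \emph{adapting} the Cummins--Gannon strategy, not by citing their theorem; that is the result you actually need, and it cannot be replaced by a hypothesis check. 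Separately, note that your route runs through Corollary \ref{cor:hecke-monic} of the present paper, which rests on Theorem \ref{thm:mg-is-bkm} and hence on the orbifold duality; this is not circular (those results do not use Theorem \ref{thm:gen-moon}), but it is anachronistic relative to \cite{GM4}, whose proof had only the Fricke Lie algebras available and therefore first reduces, via the $SL_2(\bZ)$-action of condition (4), to pairs whose first entry is Fricke (or trivial) before applying the twisted denominator identities, treating the remaining orbits as holomorphic on the compactified curve and hence constant. Finally, a small imprecision in (4): the components $F^g_{i,j}$ only encode the traces $Z(g^i,g^j;\tau)$ for the cyclic group $\langle g\rangle$, so the transformation law for a general commuting pair $(g,h)$ requires the modular invariance of twisted trace functions from \cite{DLM97}/\cite{CM16}, not just the Weil-representation modularity of $F^g$.
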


This is more or less the weakest form that can be reasonably called a solution to Norton's conjecture, because there is no control over the ambiguous constants in the simultaneous conjugation rule or the $SL_2(\bZ)$-rule.  Norton suggested in his original statement of the conjecture \cite{N87} that that these ambiguous constants can be refined to roots of unity, and further proposed in \cite{N01} that they can be refined to 24th roots of unity.  About 10 years later, a specific description of these constants in terms of a conjectural ``moonshine element'' $\alpha^\natural \in H^3(\bM, U(1))$ was proposed in \cite{GPV13}.  The group $H^3(\bM, U(1))$ is still a relatively unknown quantity, although there have been suggestions in the folklore for many years that it has order 24 or 48.  We will not prove any of these claims about ambiguous constants in their entirety, but instead only show that Norton's original ``root of unity'' proposal holds for all nonconstant functions.  We follow an argument that was roughly sketched in the final remark of \cite{GM2}.

\begin{thm} \label{thm:refined} (Refined Generalized Moonshine)
The ambiguous scalars in conditions 2 and 4 of Theorem \ref{thm:gen-moon} can be refined to a root-of-unity ambiguity whenever $Z(g,h,\tau)$ is not constant.
\end{thm}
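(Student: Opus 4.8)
The plan is to exploit the Hecke-monic property established in Corollary \ref{cor:hecke-monic} to pin down the ambiguous scalars. The key observation is that once $Z(g,h;\tau)$ is known to be Hecke-monic on $\Hom(\bZ \times \bZ, \bM) \overset{\pm \bZ}{\times} \fH$ with \emph{rational} monic polynomials $\Phi_n$, the functions are no longer free to be rescaled by arbitrary nonzero complex numbers: the Hecke relations $nT_nZ = \Phi_n(Z)$ are nonlinear in $Z$, so rescaling $Z$ by a constant $\lambda$ forces a compatibility condition on $\lambda$. First I would make precise the two places where ambiguous constants enter. Condition 2 gives, for each pair $(g,h)$ and each conjugating element $x \in \bM$, a constant $\mu(x;g,h)$ with $Z(xgx^{-1}, xhx^{-1};\tau) = \mu(x;g,h)\, Z(g,h;\tau)$; condition 4 gives, for each $\left(\begin{smallmatrix} a & b \\ c & d\end{smallmatrix}\right) \in SL_2(\bZ)$, a constant $\nu$ with $Z(g,h;\frac{a\tau+b}{c\tau+d}) = \nu\, Z(g^a h^c, g^b h^d;\tau)$. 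In each case I want to show $\mu$ and $\nu$ are roots of unity whenever $Z$ is nonconstant.

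The main mechanism is that the Hecke operator $T_n$ commutes with both simultaneous conjugation and the $SL_2(\bZ)$-action on the moduli stack $\Hom(\bZ \times \bZ, \bM) \overset{\pm \bZ}{\times} \fH$, since these are symmetries of the underlying moduli problem of elliptic curves with $\bM$-torsors. Concretely, applying the conjugation or modular-transformation rule to the identity $nT_nZ(g,h;\tau) = \Phi_n(Z(g,h;\tau))$ and comparing with the same identity for the transformed pair, I would extract a functional equation of the shape $\lambda \cdot \Phi_n(w) = \Phi_n(\lambda w)$, where $\lambda$ is the relevant ambiguous constant and $w = Z$ ranges over the (infinitely many) values taken by the nonconstant Hauptmodul. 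Since $\Phi_n$ is a fixed monic polynomial of degree $n$ with rational coefficients, equating coefficients of $w^k$ forces $\lambda = \lambda^k$ for each $k$ with a nonzero coefficient; comparing the leading term ($k=n$, coefficient $1$ by monicity) against the relation coming from the degree-zero or linear term yields $\lambda^{n-1} = 1$. Because this must hold simultaneously for all $n \geq 1$ for which $Z$ genuinely transforms (and in particular for two coprime values of $n$), I conclude $\lambda$ is a root of unity.

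The step I expect to be the main obstacle is handling the \emph{nonconstant} hypothesis correctly and ruling out the degenerate case where the polynomial identity is vacuous. If $Z$ is constant, then $w$ takes a single value and the coefficient-matching argument collapses, which is exactly why the theorem is stated only for nonconstant $Z$; but I must verify that a nonconstant Hauptmodul really does take infinitely many values, so that the polynomial identity $\lambda\,\Phi_n(w) = \Phi_n(\lambda w)$ holds as an identity of polynomials and not merely at finitely many points. This follows from the open mapping theorem applied to the holomorphic Hauptmodul (condition 1 and condition 3 of Theorem \ref{thm:gen-moon}), which guarantees the image is open, hence infinite. A secondary subtlety is that the conjugation constant $\mu(x;g,h)$ may depend on the choice of lift $\tilde{h}$ used to define $Z$; here I would fix a coherent system of lifts once and for all, as in \cite{GM4}, so that the ambiguity is genuinely a scalar multiplier and the cocycle-like consistency of the $\mu$'s under composition is available. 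With these points secured, the root-of-unity conclusion for both conditions 2 and 4 follows uniformly from the rationality and monicity of $\Phi_n$.
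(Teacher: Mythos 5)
Your overall instinct --- that the nonlinearity of the monic rational polynomials $\Phi_n$ in the Hecke-monic relation is what ultimately forces the scalars to be roots of unity --- is the right one, but the central step of your argument does not go through. The functional equation $\lambda\,\Phi_n(w)=\Phi_n(\lambda w)$ cannot be extracted the way you describe, for two reasons. First, $nT_nZ(g,h;\tau)=\sum_{ad=n,\,0\le b<d}Z(g^d,g^{-b}h^a;\frac{a\tau+b}{d})$ involves the values of $Z$ at many \emph{different} commuting pairs, and each of those pairs transforms under conjugation or under $\gamma\in SL_2(\bZ)$ with its \emph{own} unknown constant; these do not combine into a single scalar multiplying $T_nZ(g,h;\tau)$, so comparing the Hecke relation at $(g,h)$ with the relation at the transformed pair does not produce a one-parameter polynomial identity. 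Second, the commutation of $T_n$ with the $SL_2(\bZ)$-action is a statement about functions on $\Hom(\bZ\times\bZ,\bM)\overset{SL_2(\bZ)}{\times}\fH$, whereas Corollary \ref{cor:hecke-monic} only gives Hecke-monicity on the $\pm\bZ$-quotient; descending $Z$ to the $SL_2(\bZ)$-quotient is exactly what the ambiguous constants obstruct, so invoking that commutation for $Z$ itself is circular. (A smaller point: $\lambda^{n-1}=1$ for a single $n\ge 2$ already makes $\lambda$ a root of unity, while demanding it for two coprime $n$ would force $\lambda=1$, which is stronger than what is true.)

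The paper's proof supplies exactly the two ingredients your sketch is missing. It first reduces the theorem to showing that the leading coefficient of the $q$-expansion of each nonconstant $Z(g,h;\tau)$ at its unique singular cusp is a root of unity; two proportional Hauptmoduln whose leading coefficients at the common pole are roots of unity differ by a root of unity, and this handles conditions 2 and 4 simultaneously. For Fricke $g$ the singular cusp is at $i\infty$ and \cite{GM1} Lemma 2.1 applies directly. For non-Fricke $g$ the pole sits at a cusp $a/c$ with $c\ne 0$, so the paper builds an auxiliary function $Z^s$ on $\Hom(\bZ\times\bZ,\bZ/M\bZ\times\bZ/M\bZ)\overset{SL_2(\bZ)}{\times}\fH$ that is $SL_2(\bZ)$-equivariant \emph{by construction}, checks via \cite{CM16} that it is proportional to $Z$ at each transformed pair, and then restricts to Hecke operators $T_p$ for primes $p\equiv 1\pmod M$, so that the Hecke sum only revisits pairs in the single $SL_2(\bZ)$-orbit already under control. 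That restriction is precisely the device that defuses the ``mixing of constants'' problem your argument runs into; without it, or some substitute playing the same role, your proposal cannot be completed.
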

\begin{proof}
We recall that $Z(g,h;\tau)$ was defined as the graded character of a finite order lift $\tilde{h}$ of the centralizing element $h$ to a linear transformation on $V^\natural(g)$.  We showed in \cite{GM4} Theorem 4.2.2 that $Z(g,h;\tau)$ is either a Hauptmodul or a constant function.  To prove the theorem, it suffices to show that if $Z(g,h;\tau)$ is nonconstant, then the leading coefficient of its $q$-expansion at the unique singular cusp is a root of unity.  When $g$ is Fricke, this follows from the Hecke-monic property, by \cite{GM1} Lemma 2.1, and the order of the root of unity is bounded by the least common multiple of $|g|$, $|\tilde{h}|$, and 2.  We note that there is some ambiguity in the notion of leading coefficient.  However, the leading coefficients of different expansions of a modular form at a given cusp only differ by roots of unity.

Let $Z(g,h;\tau)$ be non-constant, with $g$ non-Fricke.  Then by \cite{GM4} Theorem 4.2.2, it is a Hauptmodul invariant under $\Gamma(M)$ for some $M \in \bZ_{>0}$.  Write $\hat{g}$ and $\hat{h}$ for the coordinate generators of $\bZ/M\bZ \times \bZ/M\bZ$, and define a function $Z^s$ on $\Hom(\bZ \times \bZ, \bZ/M\bZ \times \bZ/M\bZ) \overset{SL_2(\bZ)}{\times} \fH$ by
\[ Z^s(\hat{g}^a\hat{h}^c,\hat{g}^b\hat{h}^d;\tau) = \begin{cases} Z(g,h;\frac{a\tau+b}{c\tau+d}) & ad-bc=1 \\ 0 & \text{otherwise} \end{cases} \]
By \cite{CM16} Corollary 6.3, $Z^s(\hat{g}^a\hat{h}^c,\hat{g}^b\hat{h}^d;\tau)$ is proportional to $Z(g^ah^c,g^bh^d;\tau)$, for all $\left( \begin{smallmatrix} a & b \\ c & d \end{smallmatrix} \right) \in SL_2(\bZ)$.

By the $SL_2(\bZ)$-invariance of the definition of equivariant Hecke operators on functions on $\Hom(\bZ \times \bZ, \bZ/M\bZ \times \bZ/M\bZ) \overset{SL_2(\bZ)}{\times} \fH$ given in \cite{GM1} section 1, we find that that if $p$ is a prime congruent to 1 modulo $M$, then $T_pZ^s(\hat{g}^a\hat{h}^c,\hat{g}^b\hat{h}^d;\tau) = T_pZ(g,h;\frac{a\tau+b}{c\tau+d})$ for all $\left( \begin{smallmatrix} a & b \\ c & d \end{smallmatrix} \right) \in SL_2(\bZ)$.  Thus, from the Hecke-monic property of $Z(g,h;\tau)$ given in Corollary \ref{cor:hecke-monic}, we see that there is a monic polynomial $\Phi_p(x) \in \bQ[x]$ of degree $p$ such that $pT_pZ^s(\hat{g}^a\hat{h}^c,\hat{g}^b\hat{h}^d;\tau) = \Phi_p(Z^s(\hat{g}^a\hat{h}^c,\hat{g}^b\hat{h}^d;\tau))$ for all $\left( \begin{smallmatrix} a & b \\ c & d \end{smallmatrix} \right) \in SL_2(\bZ)$.

Since $Z(g, h;\tau)$ is a Hauptmodul and $g$ is non-Fricke, there is some $\left( \begin{smallmatrix} a & b \\ c & d \end{smallmatrix} \right) \in SL_2(\bZ)$ with $c\neq 0$ such that $Z^s(\hat{g}^a\hat{h}^c,\hat{g}^b\hat{h}^d;\tau)$ has a pole at $i\infty$.  We claim that the leading coefficient of the $q$-expansion at that pole is a $\frac{2M}{(M,2)}$th root of unity.  While the statement of \cite{GM1} Lemma 2.1 assumes the input function is weakly Hecke-monic, the argument only uses the $p$th Hecke operators for primes $p$ congruent to 1 modulo $M$, so the result still holds for $Z^s(\hat{g}^a\hat{h}^c,\hat{g}^b\hat{h}^d;\tau)$.  We conclude that the pole of $Z(g,h;\tau)$ at $a/c$ has leading coefficient given by a root of unity, and this ends the proof.
\end{proof}

Recently a preprint \cite{JF17} has appeared with the claim that $\alpha^\natural$ has order 24, and a referee has requested that I comment on its relation to Theorem \ref{thm:refined}.  At the time of this writing, it does not seem that the argument in that preprint is complete, because it rests on still-open conjectures concerning the dictionary relating the orbifold theories of vertex operator algebras and conformal nets.  That said, Johnson-Freyd's preprint is the first reasonably solid argument for a definite value to appear in public, and the open questions on which it depends are expected to have positive answers by essentially all experts.  The arguments in the preprint would yield a substantial strengthening of Theorem \ref{thm:refined} under the following assumptions:

\begin{enumerate}
\item The fixed-point vertex operator subalgebra $(V^\natural)^{\bM}$ is regular.  This would more or less imply the existence of a well-defined 3-cocycle $\alpha$, such that the category of $(V^\natural)^{\bM}$-modules is a modular tensor category equivalent to the $\alpha$-twisted double of the category of $\bM$-modules, and the category of twisted $V^\natural$-modules is an $\bM$-graded fusion category.  I am saying ``more or less'' because I have not seen a proof of the precise implication.
\item The category of $(V^\natural)^{\bM}$-modules is equivalent as a modular tensor category to the category of sectors for the monster fixed-point subnet of the moonshine conformal net.  This subnet is already known to be equivalent to the $\beta$-twisted double for some $\beta$ which is the subject of \cite{JF17}.  The equivalence of categories would imply $\alpha$ and $\beta$ have the same order.  Alternatively, one could try to show that the category of twisted $V^\natural$-modules is equivalent as an $\bM$-graded fusion category to the category of twisted sectors of the moonshine conformal net, and this would canonically identify $\alpha$ and $\beta$.
\item Under the assumption of regularity of the fixed-point vertex operator subalgebra, twisted twining characters of a holomorphic vertex operator algebra satisfy the relations given in \cite{GPV13}.  This does not sound too difficult, but I have only seen a proof of this claim in the case of cyclic groups \cite{vEMS}.
\end{enumerate}

Under the first two assumptions, the calculations in \cite{JF17} would give a description of the vertex-operator-algebraic anomaly, and under the third assumption, the description of ambiguous constants given in \cite{GPV13} together with the claim that they could be taken as 24th roots of unity would then be verified.

\section{Conway Lie algebras of rank 2} \label{sect:conway}

We introduce rank 2 analogues of the Monstrous Lie algebras attached to fixed-point free automorphisms of $V_\Lambda$, and show that they are Borcherds-Kac-Moody.  In fact they are isomorphic to the corresponding Monstrous Lie algebras, once we switch coordinates in the Cartan subalgebra.

\begin{defn}
Let $g$ be a non-Fricke element in $\bM$, and let $g^*$ be a corresponding automorphism of $V_\Lambda$ that is fixed-point free on $\Lambda$.  We define $L_{g^*} = OCQ(AT^i_{I\!I_{1,1}(-1/N)}({}^{g^*}_NV_\Lambda))$ for some orbifold-admissible quadratic isomorphism $i: (I\!I_{1,1}(-1/N)/I\!I_{1,1}(-N), e(Q)) \to (\bZ/N\bZ \times \bZ/n\bZ, (a,b) \mapsto e(ab/N))$, and we let $F^{g^*}$ be the vector-valued modular function defined by $F^{g^*}_{m,n}(\tau) = F^g_{n,m}(\tau)$, with coefficients $c^{g^*}_{m,n}(k) = c^g_{n,m}(k)$.
\end{defn}

\begin{prop} \label{prop:conway-lie-algebra-is-monstrous-lie-algebra}
Let $g$ be a non-Fricke element in $\bM$, and let $g^*$ be a corresponding automorphism of $V_\Lambda$ that is fixed-point free on $\Lambda$, and satisfies the ``no massless states'' condition.  Then switching the coordinates in the grading group $\bZ \times \bZ$ induces a $\widetilde{C_{\bM}(g)}$-equivariant Lie algebra isomorphism $\fm_g \cong L_{g^*}$.
\end{prop}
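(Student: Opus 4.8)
The plan is to exhibit $\fm_g$ and $L_{g^*}$ as two outputs of the \emph{same} composite functor $OCQ \circ AT$ applied to abelian intertwining algebras that are isomorphic via a grading-group automorphism, and then to check that this isomorphism intertwines the relevant centralizer actions. The key structural input is Corollary \ref{cor:orbifold-isomorphism-for-abelian-intertwining-algebras}, which supplies an isomorphism ${}^g_N V^\natural \cong {}^{g^*}_N (V^\natural/g)$ in $AIA_{(\bZ/N\bZ \times \bZ/N\bZ, q)}$ induced by the coordinate swap $(a,b) \mapsto (b,a)$. By Theorem \ref{thm:main} we have $V^\natural/g \cong V_\Lambda$ (with $g^*$ the fixed-point free automorphism of $\Lambda$), so this reads ${}^g_N V^\natural \cong {}^{g^*}_N V_\Lambda$.

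First I would verify that the $AT$ and $OCQ$ functors are compatible with the coordinate swap. The only subtlety is that $AT_L$ tensors on the hyperbolic lattice vertex algebra $V_L$ for $L = I\!I_{1,1}(-1/N)$, and then $OCQ$ produces a Lie algebra graded by the root lattice $\bZ \times \frac{1}{N}\bZ$ (with quadratic form $(a,b) \mapsto -ab$, as recalled in the proof of Theorem \ref{thm:mg-is-bkm}). Switching the two copies of $\bZ/N\bZ$ in the abelian-intertwining-algebra grading corresponds precisely to switching the two coordinates $(a,b) \mapsto (b,a)$ in $\bZ \times \frac{1}{N}\bZ$, which preserves $-ab$ and hence is an automorphism of the root lattice. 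Since $AT$ and $OCQ$ are both defined functorially from the graded pieces and their intertwining operators, an isomorphism of abelian intertwining algebras covering a lattice automorphism produces an isomorphism of the resulting Lie algebras covering the corresponding automorphism of grading groups. Thus the coordinate swap lifts to an isomorphism $\fm_g \cong L_{g^*}$ of $I\!I_{1,1}(-1/N)$-graded Lie algebras, matching the root multiplicities $c^g_{a,b}(ab) = c^{g^*}_{b,a}(ab)$ by the very definition of $F^{g^*}$.

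The remaining point, and the part that requires genuine care rather than formal functoriality, is $\widetilde{C_{\bM}(g)}$-equivariance. The group $\widetilde{C_{\bM}(g)}$ acts on ${}^g V^\natural$ through its action on the twisted modules $V^\natural(g^i)$, and I would need to check that, under the identification $(V^\natural)^g \cong (V_\Lambda)^{g^*}$ from the corollary following Theorem \ref{thm:main}, this is carried to the natural action of the \emph{same} group (identified with $C_{\Aut V_\Lambda}(g^*)/\langle g^* \rangle$) on ${}^{g^*} V_\Lambda$. Concretely, an element of the centralizer acts by fixing the twisting grading (the first $\bZ/N\bZ$-coordinate) and permuting/scaling within the second; after the coordinate swap these roles are exchanged, so I would confirm that the centralizer acts purely on the fiber directions that the swap preserves. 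Because the centralizer action commutes with $\tilde{g}$ and with $g^*$ by construction, and both act diagonally with respect to the bigrading, the swap intertwines the two actions; transporting along the Lie algebra isomorphism then gives the $\widetilde{C_{\bM}(g)}$-equivariance. I expect this equivariance bookkeeping—keeping precise track of how the projective centralizer action on twisted sectors is transported across the orbifold duality—to be the main obstacle, whereas the underlying Lie-algebra isomorphism is essentially immediate from Corollary \ref{cor:orbifold-isomorphism-for-abelian-intertwining-algebras} and the functoriality of $OCQ \circ AT$.
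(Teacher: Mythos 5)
Your proposal is correct and follows essentially the same route as the paper: invoke Corollary \ref{cor:orbifold-isomorphism-for-abelian-intertwining-algebras} for the coordinate-swap isomorphism ${}^g_N V^\natural \cong {}^{g^*}_N V_\Lambda$ and then apply the functor $OCQ \circ AT_{I\!I_{1,1}(-1/N)}$. The paper's proof is terser (it does not spell out the equivariance bookkeeping, treating it as immediate from functoriality, and notes only a ``coboundary adjustment'' on the abelian intertwining algebra side), but your added care about how the centralizer action is transported is consistent with, not divergent from, that argument.
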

\begin{proof}
By Corollary \ref{cor:orbifold-isomorphism-for-abelian-intertwining-algebras}, switching coordinates in the grading group induces an abelian intertwining algebra isomorphism ${}^g_NV^\natural \cong {}^{g^*}_NV_\Lambda$, up to coboundary adjustment.  Then applying the ``add a torus and quantize'' functor $OCQ \circ AT_{I\!I_{1,1}(-1/N)}$ to this isomorphism, where we identify $I\!I_{1,1}(-1/N)$ with $\bZ \times \bZ$ with quadratic form $(a,b) \mapsto -\frac{ab}{N}$, yields an isomorphism of Lie algebras.
\end{proof}

\begin{cor}
The Lie algebra $L_{g^*}$ is a rank 2 Borcherds-Kac-Moody Lie algebra, graded by $\bZ \times \bZ$, with Weyl vector $(0,-1)$ and its denominator formula is given by the identity
\[ T_g(\tau) - T_g(-1/\sigma) = q^{-1} \prod_{r \in \bZ_{>0}, s \in \frac{1}{N}\bZ} (1-q^r p^s)^{c^{g^*}_{r,s}(rs)} \]
In particular, the Weyl vector is $(0,1/N)$.
\end{cor}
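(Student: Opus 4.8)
The plan is to obtain every assertion by transporting the corresponding structure on $\fm_g$ across the isomorphism of Proposition~\ref{prop:conway-lie-algebra-is-monstrous-lie-algebra}. By that proposition, interchanging the two coordinates of the grading group $\bZ \times \bZ$ produces a $\widetilde{C_{\bM}(g)}$-equivariant isomorphism $\fm_g \cong L_{g^*}$ of $I\!I_{1,1}(-1/N)$-graded Lie algebras, and by Theorem~\ref{thm:mg-is-bkm} the Lie algebra $\fm_g$ is Borcherds-Kac-Moody. Since being Borcherds-Kac-Moody, together with the root lattice and all associated data (Weyl group, Weyl vector, simple roots, and root multiplicities), is preserved by any isomorphism of root-lattice-graded Lie algebras, it follows at once that $L_{g^*}$ is Borcherds-Kac-Moody; it has rank $2$ because its root lattice $I\!I_{1,1}(-1/N)$ has rank $2$.

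The denominator identity of a Borcherds-Kac-Moody Lie algebra is an intrinsic consequence of its Weyl data and root multiplicities, so it is carried along the same isomorphism: the denominator identity of $L_{g^*}$ is the image under the swap $(a,b) \mapsto (b,a)$ of the denominator identity of $\fm_g$. I would first record the latter, which is precisely the Borcherds product of Proposition~\ref{prop:unique-non-negative-eta-product} (equivalently \cite{GM2} Theorem~3.24),
\[ T_g(\sigma) - T_g(-1/\tau) = p^{-1} \prod_{i>0,\, j \in \frac{1}{N}\bZ} (1-p^i q^j)^{c^g_{i,j}(ij)}, \]
in which the monomial $p^i q^j$ records the root $(i,j)$ and $c^g_{i,j}(ij)$ is its multiplicity by Corollary~\ref{cor:lg-isom-mg}. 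The swap interchanges the two hyperbolic directions, hence interchanges the Borcherds-product variables $p \leftrightarrow q$ and the parameters $\sigma \leftrightarrow \tau$, and sends the $\fm_g$-root $(i,j)$ to the $L_{g^*}$-root $(j,i)$. Thus the left-hand side becomes $T_g(\tau) - T_g(-1/\sigma)$, while the factor of the transported product indexed by the $L_{g^*}$-root $(r,s)$ acquires the multiplicity of the $\fm_g$-root $(s,r)$, namely $c^g_{s,r}(rs)$, which is exactly $c^{g^*}_{r,s}(rs)$ by the definition of $L_{g^*}$; this produces the displayed identity.

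The Weyl vector is obtained the same way: the prefactor $p^{-1}$ exhibits the Weyl vector of $\fm_g$ as the direction dual to the first hyperbolic coordinate, and the swap carries it to the direction dual to the second, giving $(0,-1)$ in the symmetric identification $\bZ \times \bZ$ of Proposition~\ref{prop:conway-lie-algebra-is-monstrous-lie-algebra}, equivalently $(0,1/N)$ in the identification $I\!I_{1,1}(-1/N) \cong \bZ \times \frac{1}{N}\bZ$ used to index the product. The one genuinely delicate point, and the only place any care is needed, is the bookkeeping between these two identifications of $I\!I_{1,1}(-1/N)$---the symmetric one, with form $(a,b) \mapsto -ab/N$, on which the swap is literally an automorphism, and the asymmetric one, with form $(a,b) \mapsto -ab$, in which the product is most naturally written---together with keeping the roles of $p$ and $q$ and the sign convention for the Weyl vector consistent. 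Once this dictionary is fixed, both the denominator identity and the Weyl vector follow from transport of structure with no further computation.
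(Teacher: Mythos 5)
Your proposal is correct and follows essentially the same route as the paper: transport the Borcherds--Kac--Moody property, the denominator identity, and the Weyl vector across the coordinate-swap isomorphism $\fm_g \cong L_{g^*}$ of Proposition~\ref{prop:conway-lie-algebra-is-monstrous-lie-algebra}, swapping $p$ and $q$ in the Borcherds product and adjusting the Weyl vector accordingly. Your extra care about the two identifications of $I\!I_{1,1}(-1/N)$ (which reconciles the $(0,-1)$ and $(0,1/N)$ statements) is a useful clarification but not a different argument.
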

\begin{proof}
The Borcherds-Kac-Moody property is invariant under isomorphism, so the first claim follows from Proposition \ref{prop:conway-lie-algebra-is-monstrous-lie-algebra}.  The displayed identity is precisely the denominator formula for the Lie algebra $L_g$, as given in Theorem 4.2 of \cite{GM2}, but with $p$ and $q$ switched.  Similarly, the coordinates of the Weyl vector are suitably adjusted.
\end{proof}

We then obtain twisted denominator identities that are identical to those in \cite{GM2}, but with variables switched.

\begin{prop} \label{prop:conway-denominator}
Let $h \in \widetilde{C_{\bM}(g)}$, with its action on $L_{g^*}$ induced by its action on ${}^g_NV^\natural$.  Let $V^{i,j/N}_k$ denote the subspace of $({}^g_NV^\natural)_{j,i}$ on which $L(0)$ acts by $k$, or equivalently, the subspace of $({}^{g^*}_NV_\Lambda)_{i,j}$ on which $L_0$ acts by $k$.  Then for any $h \in \widetilde{C_{\bM}(g)}$ (or equivalently, $\widetilde{C_{\Aut V_\Lambda}(g^*)}$) we have the following twisted denominator formula:

\[ \begin{aligned}
q^{-1} &\exp \left( - \sum_{i>0} \sum_{s \in \frac{1}{N} \bZ_{> 0}} \mathrm{Tr}(h^i|V^{0,s}_1) q^{is}/i \right) - \sum_{r \in \bZ_{\geq 0}} \mathrm{Tr}(h|V^{r,1/N}_{1+r/N}) p^r = \\
& = q^{-1} \exp \left( - \sum_{i > 0} \sum_{r \in \bZ_{\geq 0}, s \in \frac{1}{N} \bZ_{> 0}} \mathrm{Tr}(h^i | V^{r,s}_{1+sr})q^{is}p^{ir}/i \right) .
\end{aligned} \]
\end{prop}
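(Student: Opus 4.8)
The plan is to obtain the identity by transporting the twisted denominator formula for the Monstrous Lie algebra $\fm_g$ across the equivariant isomorphism $\fm_g \cong L_{g^*}$ of Proposition \ref{prop:conway-lie-algebra-is-monstrous-lie-algebra}, rather than redoing the BGG-type homology computation directly for $L_{g^*}$. By Corollary \ref{cor:lg-isom-mg} we may identify $\fm_g$ with $L_g$, so the twisted denominator identity for $\fm_g$ is exactly the generalized-moonshine denominator formula established in section 4 of \cite{GM2}: in the grading coordinates of $\fm_g$ it expresses a trace-weighted sum of root-space contributions of the $\widetilde{C_{\bM}(g)}$-action, with the single real simple root producing a linear term in one variable and the norm-zero imaginary simple roots on the Weyl axis producing the exponential term in the other. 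Since the isomorphism of Proposition \ref{prop:conway-lie-algebra-is-monstrous-lie-algebra} is $\widetilde{C_{\bM}(g)}$-equivariant and is induced by the coordinate switch $(a,b)\mapsto(b,a)$ on the grading group, transporting this identity interchanges the two variables and relabels each graded root space, which is precisely the asserted formula with $p$ and $q$ exchanged.

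First I would write out the $\fm_g$ identity in the form it takes in \cite{GM2}, isolating its two structural pieces: the term $-\sum_r \Tr(h\,|\,\cdots)\,p^r$ arising from the real simple root, and the exponential term arising from the imaginary simple roots supported on the Weyl axis. Next I would apply Proposition \ref{prop:conway-lie-algebra-is-monstrous-lie-algebra} to carry the $\widetilde{C_{\bM}(g)}$-action and the grading over to $L_{g^*}$; the coordinate switch sends the degree $(a,b)$ root space of $\fm_g$, computed through ${}^g_NV^\natural$, to the degree $(b,a)$ root space of $L_{g^*}$, computed through ${}^{g^*}_NV_\Lambda$. All of the bookkeeping is absorbed into the definition of $V^{i,j/N}_k$, which is set up to be simultaneously the $(j,i)$-piece of ${}^g_NV^\natural$ and the $(i,j)$-piece of ${}^{g^*}_NV_\Lambda$; consequently the traces $\Tr(h^i\,|\,V^{r,s}_{1+sr})$ and $\Tr(h^i\,|\,V^{0,s}_1)$ appearing on both sides of the target identity coincide term-by-term with the transported traces coming from the $\fm_g$ formula. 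Finally I would confirm that the $q^{-1}$ prefactor and the conformal-weight shifts $1+sr$ and $1+r/N$ are the images of the corresponding $p^{-1}$ and shifts in $\fm_g$: these encode the Weyl vector together with the grading shift introduced by $OCQ \circ AT_{I\!I_{1,1}(-1/N)}$, and the Corollary preceding this proposition already records that the Weyl vector transports to $(0,1/N)$, which is what yields the $q^{-1}$.

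The main obstacle will be the precise matching of the $\widetilde{C_{\bM}(g)}$-action across the isomorphism, rather than the shape of the series. The action on $L_{g^*}$ is by definition the transported action from ${}^g_NV^\natural$, so equivariance holds tautologically in one direction; the genuine technical point is the ``up to coboundary adjustment'' appearing in the proof of Proposition \ref{prop:conway-lie-algebra-is-monstrous-lie-algebra}, inherited from the abelian intertwining algebra isomorphism of Corollary \ref{cor:orbifold-isomorphism-for-abelian-intertwining-algebras}. I would check that this adjustment is homogeneous, hence commutes with the group action, so that it rescales graded pieces only by scalars that cancel between the two sides of the denominator identity and leaves every trace $\Tr(h^i\,|\,V^{\bullet,\bullet}_\bullet)$ unchanged. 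I would also verify, using the corollary to Theorem \ref{thm:main} identifying $\widetilde{C_{\bM}(g)}$ with $\widetilde{C_{\Aut V_\Lambda}(g^*)}$, that the transported action agrees with the intrinsic action on ${}^{g^*}_NV_\Lambda$, justifying the parenthetical ``or equivalently'' in the statement. Once this compatibility is fixed, the identity is an immediate transcription of the \cite{GM2} formula with the variables switched.
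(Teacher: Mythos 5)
Your proposal matches the paper's proof: the paper likewise obtains the identity by transporting the twisted denominator formula of \cite{GM2} (Proposition 4.7) across the $\widetilde{C_{\bM}(g)}$-equivariant isomorphism $\fm_g \cong L_{g^*}$ of Proposition \ref{prop:conway-lie-algebra-is-monstrous-lie-algebra}, with the variables switched. Your additional checks on the coboundary adjustment and the Weyl vector are sensible elaborations of details the paper leaves implicit, but the route is the same.
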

\begin{proof}
By the isomorphism in Proposition \ref{prop:conway-lie-algebra-is-monstrous-lie-algebra}, we just need the twisted denominator formula given in Proposition 4.7 of \cite{GM2}, but with the variables $p$ and $q^{1/N}$ switched.
\end{proof}

\begin{rem}
When $\sigma$ is a fixed-point free automorphism of $\Lambda$, but doesn't satisfy the ``no masssless states'' condition, we can still make a rank 2 Lie algebra $L_\sigma$, but we find that there are norm zero simple roots on both axes.  By the same argument as in Theorem \ref{thm:mg-is-bkm}, the Borcherds-Kac-Moody condition for this Lie algebra is then equivalent to the claim that $V_\Lambda/\sigma \cong V_\Lambda$.  Sven M\"oller has recently informed me that he has proved this claim for all 42 conjugacy classes (which form 39 algebraic conjugacy classes) of such $\sigma$.  It is therefore natural to consider Moonshine questions for these cases as well.
\end{rem}

\begin{defn}
Let $g$ and $h$ be commuting automorphisms of $V_\Lambda$.  We define the orbifold functions $Z_\Lambda(g,h;\tau)$ (up to a root of unity ambiguity) by choosing a lift $\tilde{h}$ of $h$ to a finite order automorphism of $V_\Lambda(g)$, and setting $Z_\Lambda(g,h;\tau) = \Tr(\tilde{h}q^{L_0-1}|V_\Lambda(g))$.
\end{defn}

\begin{prop}
For any $\left(\begin{smallmatrix} a & b \\ c & d \end{smallmatrix} \right) \in SL_2(\bZ)$ and any commuting pair $(g,h)$ in $\Aut V_\Lambda$, $Z(g,h,\frac{a\tau+b}{c\tau+d})$ is proportional to $Z(g^a h^c, g^b h^d,\tau)$.
\end{prop}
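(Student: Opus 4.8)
The plan is to obtain the $SL_2(\bZ)$-covariance from the general modular invariance of orbifold trace functions for holomorphic vertex operator algebras, exactly as was done for $V^\natural$ in \cite{GM4} (condition 4 of Theorem \ref{thm:gen-moon}). The key point is that $V_\Lambda$ is a simple, $C_2$-cofinite, holomorphic vertex operator algebra of CFT type of central charge $24$, so every structural ingredient used for $V^\natural$ is available here verbatim; in particular the normalization $q^{L_0-1} = q^{L_0 - c/24}$ is the correct one.

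First I would record the setup. For each finite-order $g \in \Aut V_\Lambda$, Theorem 10.3 of \cite{DLM97} provides a unique irreducible $g$-twisted module $V_\Lambda(g)$. Since $h$ commutes with $g$, pulling back a $g$-twisted module along $h$ again yields a $g$-twisted module, so $h$ preserves the isomorphism class of $V_\Lambda(g)$ and lifts to a finite-order operator $\tilde{h}$ on $V_\Lambda(g)$, canonical up to a scalar. This is exactly the data defining $Z_\Lambda(g,h;\tau)$, and it is why the function is determined only up to the stated ambiguity.

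Next I would invoke modular invariance. In the holomorphic case the uniqueness of the twisted module in each sector forces the action of $SL_2(\bZ)$ on the span of the torus amplitudes to be \emph{monomial}: a modular transformation cannot resolve $Z_\Lambda(g,h;\tau)$ into a genuine linear combination of trace functions, but only into a single scalar multiple of another one. Tracking the two standard generators and composing, the element $\left(\begin{smallmatrix} a & b \\ c & d \end{smallmatrix}\right)$ sends $Z_\Lambda(g,h;\frac{a\tau+b}{c\tau+d})$ to a scalar multiple of $Z_\Lambda(g^a h^c, g^b h^d;\tau)$. That the scalar is independent of $\tau$ follows because both functions are holomorphic on $\fH$ with matching $q$-expansion shapes in each finite-dimensional graded piece, so their ratio is a holomorphic invariant function without poles, hence constant. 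When $h$ lies in the cyclic group generated by $g$, this specializes to the vector-valued modular transformation already recorded for the unrolled abelian intertwining algebra, which serves as a consistency check.

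The one genuine subtlety is a matter of convention rather than of content: one must fix the identification of the two twisted sectors with an oriented basis of $H_1$ of the torus so that $\left(\begin{smallmatrix} a & b \\ c & d \end{smallmatrix}\right)$ produces the pair $(g^a h^c, g^b h^d)$ and not the sign-twisted variant flagged in the earlier remark on Hecke-operator sign conventions. Since we assert only proportionality and make no attempt to pin down the scalars, no control of the multiplier system or of any cohomological obstruction is required, and the argument is complete.
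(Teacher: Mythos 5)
Your argument is essentially the paper's: the proof there consists of citing the general modular transformation rule for twisted trace functions of a simple, holomorphic, $C_2$-cofinite vertex operator algebra (Remark 5.27 of \cite{CM16}), which is exactly the ``uniqueness of the twisted sector forces a monomial $SL_2(\bZ)$-action'' reasoning you spell out. Your unpacking of why the scalar is constant is unnecessary (it is part of the cited modular invariance statement) but harmless.
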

\begin{proof}
This follows immediately from Theorem 6.2 in \cite{CM16}, since $V_\Lambda$ is a simple holomorphic $C_2$-cofinite vertex operator algebra.
\end{proof}

We then find ourselves with behavior that is quite similar to what we found in Generalized Monstrous Moonshine, so we have the following natural questions:
\begin{enumerate}
\item Does the twisted denominator formula in Proposition \ref{prop:conway-denominator} imply $Z_\Lambda(g,h;\tau)$ is Hecke-monic when $g$ is fixed-point free and satisfies the ``no massless states'' condition?
\item If $g$ is fixed-point free and satisfies the ``no massless states'' condition, are the functions $Z_\Lambda(g,h;\tau)$ either constants or Hauptmoduln?
\item If $g$ does not satisfy the two conditions, what can we say about $Z_\Lambda(g,h;\tau)$?
\end{enumerate}

In 2009, Tuite suggested to me that questions about $V_\Lambda$-moonshine and the fine structure of non-Fricke twists of $V^\natural$ are susceptible to exhaustive computational attack, since in principle everything can be written explicitly in terms of the Leech lattice.  These new results seem to add weight to his claim, but I do not seem to have the computational fortitude to make serious progress.

\section{Appendix} \label{sect:appendix}



We produce a table giving the correspondence between non-Fricke non-anomalous classes in $\bM$ and fixed-point free non-anomalous classes in $Co_0$ in \cite{GAP4} and ATLAS notation.
The subscripts in the ATLAS notation come from the separation of classes in $Co_1$, and are explained in \cite{ATLAS} section 7.8, page xxvi.  As part of the computation, we include non-negative eta expansions (see Definition \ref{defn:non-negative}), which are reciprocals of the corresponding Frame shapes of $Co_0$ classes, together with norm zero root multiplicities in the Lie algebra $\fm_g$.  The multiplicity at $(k,0)$ for an element of order $n$, given by $\sum_{d|(k,n)} a_d$, is only listed for $k|n$ in increasing order, since the multiplicities only depend on $\gcd(k,n)$.  We add the exponent $\phi(n/k)$ to indicate how many times the root appears modulo $n$.
\newpage

\begin{tabular}{l|l|l|l|l}
$\bM$ & eta product & norm zero roots & GAP & ATLAS \\ \hline 
2B & $1^{24}/2^{24}$ & $24^1, 0^1$ & 2a & $1A_1$ \\
3B & $1^{12}/3^{12}$ & $12^2, 0^1$ & 3a & $3A_0$ \\
4C &  $1^8/4^8$ & $8^2, 8^1, 0^1$ & 4c & $4A_1$ \\
5B & $1^6/5^6$ & $6^4, 0^1$ & 5a & $5A_0$ \\
6C & $1^6 3^6/2^6 6^6$ & $6^2, 0^2, 12^1, 0^1$ & 6b & $3B_1$ \\ 
6D & $1^4 2^4/3^4 6^4$ & $4^2, 8^2, 0^1, 0^1$ & 6e & $6A_0$ \\
6E & $1^5 3^1/2^1 6^5$ & $5^2, 4^2, 6^1, 0^1$ & 6j & $6D_1$ \\
7B & $1^4/7^4$ & $4^6, 0^1$ & 7a & $7A_0$ \\
8E & $1^4 4^2 /2^2 8^4$ & $4^4, 2^2, 4^1, 0^1$ & 8e & $8C_1$ \\
9B & $1^3/9^3$ & $3^6, 3^2, 0^1$ & 9a & $9A_0$ \\
10B & $1^4 5^4/2^4 10^4$ & $4^4, 0^4, 8^1, 0^1$ & 10b & $5B_1$ \\
10C & $1^2 2^2/ 5^2 10^2$ & $2^4, 4^4, 0^1, 0^1$ & 10d & $10A_0$ \\
10E & $1^3 5^1/ 2^1 10^3$ & $3^4, 2^4, 4^1, 0^1$ & 10i & $10E_1$ \\
12B & $1^4 4^4 6^4/2^4 3^4 12^4$ & $4^4, 0^2, 0^2, 4^2, 0^1, 0^1$ & 12c & $12A_0$ \\
12E & $1^2 3^2/4^2 12^2$ & $2^4, 2^2, 4^2, 0^2, 4^1, 0^1$ & 12j & $12E_1$ \\
12I & $1^3 4^1 6^2/2^2 3^1 12^3$ & $3^4, 1^2, 2^2, 2^2, 2^1, 0^1$ & 12r & $12K_1$ \\
13B & $1^2/13^2$ & $2^{12}, 0^1$ & 13a & $13A_0$ \\
14B & $1^3 7^3/2^3 14^3$ & $3^6, 0^6, 6^1, 0^1$ & 14b & $7B_1$ \\
15B & $1^2 5^2/3^2 15^2$ & $2^8, 0^4, 4^2, 0^1$ & 15b & $15B_0$ \\
16B & $1^2 8^1/2^1 16^2$ & $2^8, 1^4, 1^2, 2^1, 0^1$ & 16c & $16B_1$ \\
18A & $1^1 2^1/9^1 18^1$ & $1^6, 2^6, 1^2, 2^2, 0^1, 0^1$ & 18d & $18A_0$ \\
18C & $1^3 6^2 9^3/2^3 3^2 18^3$ & $3^6, 0^6, 1^2, 0^2, 4^1, 0^1$ & 18c & $9C_1$ \\
18D & $1^2 6^1 9^1/2^1 3^1 18^2$ & $2^6, 1^6, 1^2, 1^2, 2^1, 0^1$ & 18g & $18B_1$ \\
20C & $1^2 4^2 10^2/2^2 5^2 20^2$ & $2^8, 0^4, 2^4, 0^2, 0^1, 0^1$ & 20c & $20A_0$ \\
21B & $1^1 3^1/7^1 21^1$ & $1^{12}, 2^6, 0^2, 0^1$ & 21b & $21B_0$ \\
22B & $1^2 11^2/2^2 22^2$ & $2^{10}, 0^{10}, 4^1, 0^1$ & 22a & $11A_1$ \\
24C & $1^2 6^1 8^2 12^1/2^1 3^2 4^1 24^2$ & $2^8, 1^4, 0^4, 0^2, 0^2, 2^2, 0^1, 0^1$ & 24d & $24B_0$ \\
28C & $1^1 7^1/4^1 28^1$ & $1^{12}, 1^6, 0^6, 2^2, 2^1, 0^1$ & 28c & $28A_1$ \\
30A & $1^3 6^3 10^3 15^3/2^3 3^3 5^3 30^3$ & $3^8, 0^8, 0^4, 0^2, 0^4, 0^2, 0^1, 0^1$ & 30a & $15A_1$ \\
30C & $1^1 3^1 5^1 15^1/2^1 6^1 10^1 30^1$ & $1^8, 0^8, 2^4, 2^2, 0^4, 0^2, 4^1, 0^1$ & 30d & $15D_1$ \\
30G & $1^2 6^1 10^1 15^2/2^2 3^1 5^1 30^2$ & $2^8, 0^8, 1^4, 1^2, 0^4, 0^2, 2^1, 0^1$ & 30e & $15E_1$ \\
33A & $1^1 11^1/3^1 33^1$ & $1^{20}, 0^{10}, 2^2, 0^1$ & 33a & $33A_0$ \\
36B & $1^1 4^1 18^1/2^1 9^1 36^1$ & $1^{12}, 0^6, 1^4, 1^6, 0^2, 0^2, 1^2, 0^1, 0^1$ & 36a & $36A_0$ \\
42B & $1^2 6^2 14^2 21^2/2^2 3^2 7^2 42^2$ & $2^{12}, 0^{12}, 0^6, 0^6, 0^2, 0^2, 0^1, 0^1$ & 42a & $21A_1$ \\
46AB & $1^1 23^1/2^1 46^1$ & $1^{22}, 0^{22}, 2^1, 0^1$ & 46ab & $23A_1B_1$ \\
60D & $1^1 12^1 15^1 20^1/3^1 4^1 5^1 60^1$ & $1^{16}, 1^8, 0^8, 0^8, 0^4, 0^4, 0^2, 0^4, 0^2, 0^2, 0^1, 0^1$ & 60d & $60A_1$ \\
70B & $1^1 10^1 14^1 35^1/2^1 5^1 7^1 70^1$ & $1^{24}, 0^{24}, 0^6, 0^4, 0^6, 0^4, 0^1, 0^1$ & 70a & $35A_1$ \\
78BC & $1^1 6^1 26^1 39^1/2^1 3^1 13^1 78^1$ & $1^{24}, 0^{24}, 0^{12}, 0^{12}, 0^2, 0^2, 0^1, 0^1$ & 78ab & $39A_1B_1$
\end{tabular}

\end{document}